\newtheorem{theorem}{Theorem}[subsection]
\newtheorem*{main theorem}{Main Theorem}
\newtheorem{definition}[theorem]{Definition}
\newtheorem{claim}[theorem]{Claim}
\newtheorem{lemma}[theorem]{Lemma}
\newtheorem{corollary}[theorem]{Corollary}
\newtheorem{notion}[theorem]{Notion}
\newtheorem{remark}[theorem]{Remark}
\newtheorem{conjecture}[theorem]{Conjecture}
\newcommand{\Gl}[3]{\mathrm{GL}_{#1}(\mathbb{#2}_{#3})}
\newcommand{\bb}[1]{\mathbb{#1}}
\newcommand{\rmf}[1]{\mathrm{#1}}
\newcommand{\jr}[2]{\mathcal{J}_{#1}(#2)}
\newcommand{\InPr}[2]{\left < {#1}, {#2} \right >}
\newdimen\cdsep
\def\cdstrut{\vrule height .6\cdsep width 0pt depth .4\cdsep}
\def\@cdstrut{{\advance\cdsep by 2em\cdstrut}}
\def\arrow#1#2{
	\ifx d#1
	\llap{$\scriptstyle#2$}\left\downarrow\cdstrut\right.\@cdstrut\fi
	\ifx u#1
	\llap{$\scriptstyle#2$}\left\uparrow\cdstrut\right.\@cdstrut\fi
	\ifx r#1
	\mathop{\hbox to \cdsep{\rightarrowfill}}\limits^{#2}\fi
	\ifx l#1
	\mathop{\hbox to \cdsep{\leftarrowfill}}\limits^{#2}\fi
}
\title{Distinguished Representations of $\Gl{n}{F}{q}$}
\author{Guy Kapon}
\date{October 2021}
\begin{document}

\maketitle
\begin{abstract}
Let $\rmf{G} = \rmf{Gl}_{n}(K)$, and $\rmf{H} = \rmf{G}^{\sigma}$ for $\sigma$ an involution of the form $g\rightarrow aga^{-1}$, It is known that for $K =\bb{Q}_q$ any irreducible representation of $\rmf{G}$ with an $\rmf{H}$ invariant functional, is self dual, we prove an analogous result for $\bb{F}_{q}$.
\end{abstract}

\tableofcontents
\section{Introduction}
Relative representation theory is concerned with understanding representations of the form $C^{\infty}(G/H)$ or some other suitably chosen space of functions, In particular an interesting question is which irreducible representations sit inside $C^{\infty}(G/H)$.

This representation will be the induction from $H$ to $G$ of the trivial representation taken in the appropriate sense, and therefore we will have $\rmf{Hom}_{G}(\pi,C^{\infty}(G/H))=\rmf{Hom}_{H}(\pi|_{H},1_{H})$. In other words the number of times $\pi$ appears in $C^{\infty}(G/H)$ is the dimension of $H$ invariant functionals, in the case where this is non-zero $\pi$ is called $H$-distinguished.

An important family of $H$ are the ones that form symmetric pairs, If $G$ is an algebraic group, and $\theta$ an involution, we denote $H=G^{\theta}$ the fixed points of $\theta$, and we call the pair $(G,H)$ a symmetric pair (and $G/H$ a symmetric space). 


In \cite{prasad2015relative} Prasad made a conjecture, which was later generalized by Erez Lapid, for a necessary condition for a representation to be distinguished in the case of symmetric pairs.

\begin{conjecture}
(Lapid-Prasad) Let $G$ be a connected reductive algebraic group defined over a local non-archemidiean field $F$, $\theta:G\rightarrow G$ an involution defined over $F$, $\rmf{H}=G^{\theta}$. Let $\pi $ be a smooth irreducible representation of $G(F)$. If $\pi$ is $\rmf{H}-$distinguished then the L-packet\footnote{We will not define L-packets as we do not need them here, for an expository note see \cite{article}} of $\pi$ is invariant to the functor $\tau \rightarrow \tilde{\tau}^{\theta}$, where $\tilde{\tau}$ is the contragredient representation, and $\tau^{\theta}$ is the twist by $\theta$.
\end{conjecture}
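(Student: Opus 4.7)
The plan is to attack the conjecture by combining a Gelfand--Kazhdan style involution argument with input from the local Langlands correspondence. First, I would look for an anti-automorphism $\tau$ of $G(F)$ satisfying $\tau^2 = \mathrm{id}$, $\tau(H) = H$, and the double-coset condition $\tau(HgH) = Hg^{-1}H$ for every $g \in G(F)$. Since $\theta$ is the inner involution $g \mapsto aga^{-1}$, the natural candidate is $\tau(g) = a \cdot {}^t g \cdot a^{-1}$, with ${}^t g$ the transpose. If this $\tau$ satisfies the double-coset condition, the Gelfand--Kazhdan lemma applied to $H$-bi-invariant distributions yields both multiplicity one for $H$-distinguished representations and, crucially, the implication that $\pi$ is $H$-distinguished if and only if $\tilde\pi^\theta$ is. For $G = \mathrm{GL}_n$, where L-packets are singletons, this already settles the conjecture, since $\pi$ and $\tilde\pi^\theta$ must then coincide.

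To promote the singleton case to the full L-packet statement for a general reductive group, I would use that $\tau \mapsto \tilde\tau^\theta$ is an involution sending L-packets to L-packets. Writing the image of $\Pi_\phi$ under this functor as $\Pi_{\phi'}$, the first step matches distinguished members of $\Pi_\phi$ bijectively with distinguished members of $\Pi_{\phi'}$. Invoking the characterization of L-packets via stable characters, together with the stability of the set of distinguished representations, one would conclude $\Pi_\phi = \Pi_{\phi'}$.

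The main obstacle I anticipate is verifying the double-coset property of $\tau$. This requires an understanding of the space $H \backslash G(F) / H$, which by Richardson--Springer theory for symmetric varieties is governed by $\theta$-twisted involutions in the Weyl group augmented by Galois-cohomological data at the non-archimedean place. There is no purely formal reason that transposition should preserve every double coset, and the stratum-by-stratum verification is delicate, particularly when $a$ is not conjugate to a diagonal $\pm 1$ matrix. A secondary obstacle lies in the second step: in the absence of a purely local characterization of L-packets, one must rely on endoscopic or trace-formula input, which is presumably why the paper restricts to the finite-field analog, where $H \backslash G / H$ is literally finite and Green's parametrization of $\mathrm{Irr}(\mathrm{GL}_n(\mathbb{F}_q))$ replaces the need for a Langlands correspondence.
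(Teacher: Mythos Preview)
The statement you are attempting to prove is the Lapid--Prasad \emph{conjecture}; the paper does not prove it and does not claim to. It is presented in the introduction as an open conjecture, and the paper's actual contribution is the Main Theorem, which is only the finite-field analog for $G = \mathrm{GL}_n(\mathbb{F}_q)$ with $\theta$ the inner involution $g \mapsto AgA^{-1}$, $A^2 = I$. There is therefore no proof in the paper to compare your proposal against.

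On your proposal as it stands: it is a research outline rather than a proof, and you have correctly located its two genuine gaps. First, you assume from the outset that $\theta$ is inner, writing $\theta(g) = aga^{-1}$; the conjecture, however, is stated for an arbitrary involution of $G$ defined over $F$, so this already discards most symmetric pairs the conjecture addresses (Galois pairs, $(\mathrm{GL}_n(E),U_n)$, etc.). Second, even in the inner $\mathrm{GL}_n$ case, verifying that your candidate anti-involution $\tau(g) = a\,{}^t g\, a^{-1}$ satisfies the Gelfand--Kazhdan double-coset condition is not a lemma but a substantial theorem: for $(\mathrm{GL}_{p+q}, \mathrm{GL}_p \times \mathrm{GL}_q)$ this is precisely the content of Jacquet--Rallis, the result the paper imports as Theorem~\ref{jaq-ral} and uses as a black box. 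Your second step, upgrading a bijection between distinguished members of $\Pi_\phi$ and $\Pi_{\phi'}$ to the equality $\Pi_\phi = \Pi_{\phi'}$ by invoking ``stability of the set of distinguished representations,'' is not an available ingredient: such stability is not known in general and is close to a reformulation of the conjecture itself. In short, what you have written down is a fair description of why the conjecture is hard, not a proof of it, and the paper sidesteps all of these obstacles by working over $\mathbb{F}_q$, where the $p$-adic result of Theorem~\ref{jaq-ral} handles the cuspidal case and Zelevinsky's combinatorics replaces any appeal to L-packets.
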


This is also conjectured for the archimedean case, we only need to take the correct category of representations.

Several cases of this were proved: 
\begin{itemize}

\item For the pair $(\Gl{a+b}{Q}{p},\Gl{a}{Q}{p} \times \Gl{b}{Q}{p})$ this was proven in \cite{uniq}.

\item In \cite{Feigon2012OnRD} the case where $G =\mathrm{GL}_{n}(E)$ for $E/F$ a quadratic extension, and $\rmf{H}$ is a subgroup of unitary transformations was proved. 

\item The case of real Galois symmetric pairs was proved in \cite{2017}

\end{itemize}
Another natural setting where this conjecture can be stated is over finite fields, In this paper, we will prove this conjecture for the Jaquet-Rallis case over finite fields (of characteristic different than 2).

\begin{main theorem}
\label{main theorem}
Let $q$ be an odd prime power, denote $\rmf{G}=\Gl{n}{F}{q}$ and let $A\in \Gl{n}{F}{q}$ be a matrix such that $A^2=I$. Define $\rmf{H}=\{X\in \rmf{G}| AXA^{-1}=X\}$. Assume we are given an irreducible representation $\pi$ of $\rmf{G}$ s.t $\pi^{\rmf{H}}\neq 0$, then $\pi^*\cong \pi$
\end{main theorem}
\subsection{Ideas of The Proofs}
We start by using the result for $\bb{Q}_{q}$, from an irreducible cuspidal representation of $\Gl{n}{F}{q}$ we can get an irreducible cuspidal of $\Gl{n}{Q}{p}$, if we started with a distinguished one we will end up with a distinguished one, which is, therefore, self-dual, and since the process is injective the original representation is also self-dual.

The next step is to try and see what happens during parabolic induction, using Mackey theory and some standard manipulations we get that if a parabolic induction is distinguished then some Jacques restriction of it is also distinguished (with respect to a different involution).

We then use the theory of Zelevinsky to break the induction process to two steps, irreducibles that lie in the parabolic induction of a parabolic $\rho$ with itself $n$ times are called $\rho$-primary, and any representation is the parabolic induction of primary representations over different cuspidals.

We then prove the case of primary representations, and then using it we complete the proof for general representations.

\subsection{Structure of The Paper}
In \S \ref{sec:2} we review the notions we will use for $\rmf{Gl}_{n}$, and it's subgroups, this will mostly be used in \S\ref{sec:4}.

In \S \ref{sec:3} we prove the case of cuspidal representations using \cite{uniq}.

In \S \ref{sec:4} we prove the general case, starting by doing the "induction step", then reviewing the Zelevinsky PSH-algebra theory, and then using it to complete the proof.

\subsection{Acknowledgments}

I would like to thank Avraham Aizenbud, for suggesting this problem, helping us along the way, and teaching me mathematics. 

I was partially supported by ISF 249/17 and BSF 2018201 grants.
 \section{Preliminaries} \label{sec:2}
 \subsection{Notations}
Throughout the paper we fix an odd prime power $q$, In this section, we mainly explain the notions we use for subgroups and operations on $\Gl{n}{F}{q}$, when
 there is no confusion we will use $G_{n}$ for $\Gl{n}{F}{q}$.
 
 \begin{notion}
 We use $[n]$ for the set $\{0,1,...,n-1\}$, when working with $\rm{Gl}_{n}$ we consider those as the indexes of the matrix.
 \end{notion}
 \begin{definition}
 For an ordered partition of $[n]$ with sets $T=\{T_{0},T_{1},\cdots T_{m-1}\}$ we denote by $P(T),U(T),L(T)$ the corresponding standard parabolic, its unipotent subgroup, and it's levy quotient (which we consider inside $G_{n}$). We denote by $f_T:[n]\rightarrow [m]$ the function that sends $T_{i}$ to $i$.
 \end{definition}
 When there is no confusion we drop the $T$ from the notation.
 
 The groups can be written directly:
 $$ P(T) = \{A\in G_{n}| \forall_{i,j}: f_{T}(i)>f_{T}(j)\Rightarrow A_{i,j}=0\}$$
 $$ U(T) = \{I+A\in G_{n}| \forall_{i,j}: f_{T}(i)\geq f_{T}(j)\Rightarrow A_{i,j}=0\}$$
 $$ L(T) = \{A\in G_{n}| \forall_{i,j}: f_{T}(i)\neq f_{T}(j)\Rightarrow A_{i,j}=0\}$$

We abuse notation and denote by $T_{i}$ also the space generated by the coordinates of $T_{i}$. 
 \begin{notion}
 For $i \in [m]$ we denote $n_{i}=|T_{i}|$, we denote by $L_{i}=\Gl{n_{i}}{F}{q}$ where we think of it as the i'th factor in the levy group.
 \end{notion}
 
 Throughout the paper we will usually have some involution $\sigma$ fixed, our involutions are always conjugation maps, so we define:
 \begin{definition}
 for $A\in G_{n}$, $\sigma_{A}:G_{n}\rightarrow G_{n}$ is the conjugation by $A$, $\sigma_{A}(X)=A\cdot X\cdot A^{-1}$. $H_{A}$ is the subgroup fixed by $\sigma_{A}$, i.e the centraliser of $A$.
 \end{definition}
 
 We will have most of the time that $A$ normalizes the standard torus, in that case, $A$ is a permutation of the indices up to an element of the torus, and we denote that permutation $\tau_{A}$.

From a representation of $L$ we can construct a representation of $G_{n}$ by parabolic induction. Since $L$ is a product of groups we consider the parabolic induction as a multiplication of representations and denote it in a way that reflects this.
 \begin{definition}
 If $\pi_{i}\in rep(L_{i})=rep(G_{n_{i}})$ are representations, we denote $x = \boxtimes_{i\in [m]} \pi_{i}$ is the outer tensor product of them, which is a representation of $L$. We then consider it as representation of $P$ using the canonical map $P\rightarrow L$. With this notions we define: $$\prod_{i\in [m]} \pi_{i} = Ind_{P}^{G_{n}} x$$
 \end{definition}
 
 In the other direction, we have the Jacquet functor.
 \begin{definition}
 Let $\pi$ be a representation of $G_{n}$, we denote by $\jr{T}{\pi}$ the Jacquet restriction, which is defined by restricting to $P$ and then taking $U$ co-invariants, this is a representation of $L$.
     
 \end{definition}

 \section{The Cuspidal Case} \label{sec:3}
For our base case we use the result for $\bb{Q}_{q}$:
 \begin{theorem}[{\cite[Theroem~1.1]{uniq}}] \label{jaq-ral} Let $G_{n} = \rmf{GL}_{n}(K)$ where $K$ is a local non-Archimedean field of characteristic 0. let p,q be positive integers with $p+q=n$ and let $H_{p,n}$ be the subgroup of $G_{n}$ which is $G_{p}\times G_{q}$. if $\pi$ is an admissible irreducible representation of $G_{n}$, then:
 $$ \rmf{dim}(\InPr{\pi|_{H_{p,q}}}{1_{H_{p,q}}})\leq 1$$
 And if there is equality then $\pi$ is isomorphic to its contragredient representation.

 \end{theorem}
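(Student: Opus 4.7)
The plan is to invoke the Gelfand--Kazhdan criterion: in order to obtain both the multiplicity one bound and the self-duality conclusion in a single stroke, it suffices to exhibit an anti-involution $\theta$ of $G_n = \rmf{GL}_n(K)$ with the properties that $\theta(H_{p,q}) = H_{p,q}$, the functor $\pi \mapsto \pi^\theta$ implements the contragredient on smooth irreducibles, and every $H_{p,q} \times H_{p,q}$-invariant distribution on $G_n$ is automatically fixed by $\theta$. The natural candidate is the transposition $\theta(g) = g^t$, which preserves the block subgroup $H_{p,q}$ and is well known to realize $\pi \mapsto \tilde{\pi}$ on $\rmf{GL}_n$. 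Granted this, the standard Gelfand pair argument immediately yields both assertions of the theorem.

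The heart of the proof is therefore the distributional statement. I would attack it by stratifying $G_n$ according to the semisimple part under the $H_{p,q}$-action and running Harish-Chandra descent: near each semisimple class $s$, the local behavior of an $H_{p,q}$-biinvariant distribution is governed by the centralizer symmetric pair $(Z_{G_n}(s), Z_{H_{p,q}}(s))$, which decomposes as a product of smaller Jacquet--Rallis pairs and $\rmf{GL}$-type pairs. Using the non-archimedean Luna slice theorem (Aizenbud--Gourevitch--Sayag), this reduces the full claim, by induction on $n$, to the analysis of distributions supported on the nilpotent cone of the tangent space $\mathfrak{g}^{-\theta_{p,q}}$, the generic semisimple case being handled by the elementary torus computation.

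The main obstacle is exactly this nilpotent step: one must show that no nonzero distribution on $\mathfrak{g}^{-\theta_{p,q}}$ supported on the finite union of nilpotent $H_{p,q}$-orbits can be bi-invariant while failing to be $\theta$-invariant. The standard route is a Fourier-homogeneity argument: the Fourier transform preserves $H_{p,q}$-invariance but reverses the expected $\theta$-parity, and when combined with the Aizenbud--Gourevitch integrability theorem (equivalently, a wave-front set argument on the singular support), it forces any such exotic distribution to vanish. Checking this compatibility for every nilpotent orbit, together with verifying the regularity hypotheses required to run the descent across all strata, is where I expect the bulk of the technical work to lie.
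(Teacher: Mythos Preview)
The paper does not prove this theorem at all: it is quoted verbatim as \cite[Theorem~1.1]{uniq} and used as a black-box input to the cuspidal case. There is therefore nothing in the present paper to compare your argument against.

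That said, your sketch is an accurate outline of how the cited result is actually established. The transpose anti-involution $\theta(g)=g^{t}$ does stabilise the block subgroup $H_{p,q}$ and realises the contragredient on smooth irreducibles of $\rmf{GL}_n$, so the Gelfand--Kazhdan criterion reduces both the multiplicity bound and the self-duality to the single assertion that every $H_{p,q}\times H_{p,q}$-invariant distribution on $G_n$ is $\theta$-invariant. The reduction you describe---Harish-Chandra descent via the Luna slice theorem of Aizenbud--Gourevitch(--Sayag), induction on $n$ through the centraliser pairs, and the endgame on the nilpotent cone handled by a Fourier-transform/homogeneity argument---is precisely the architecture of the proof in the source you would be citing. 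Your identification of the nilpotent step as the crux is also correct; that is where the genuine work sits, and it is not something one can complete in a paragraph, but as a roadmap your proposal is sound.
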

 
For us only the case of equality is relevant

The second thing we use is:
 \begin{lemma}
 There is an injective map between irreducible cuspidal representations of $\Gl{n}{F}{q}$ and irreducible cuspidal representations of $\Gl{n}{Q}{q}$.
 \end{lemma}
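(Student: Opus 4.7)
The plan is to realize each irreducible cuspidal $\rho$ of $\Gl{n}{F}{q}$ as the ``depth zero'' building block of a supercuspidal representation of $\Gl{n}{Q}{q}$, via compact induction from the maximal compact subgroup. Let $\mathcal{O}\subset\bb{Q}_q$ be the ring of integers, $\mathfrak{p}$ its maximal ideal, and $\varpi$ a uniformizer. Set $K_0=\mathrm{GL}_n(\mathcal{O})$ and $K_1=1+\mathfrak{p}M_n(\mathcal{O})$, so that reduction modulo $\mathfrak{p}$ identifies $K_0/K_1$ with $\Gl{n}{F}{q}$. Inflate $\rho$ through this quotient to a representation $\tilde\rho$ of $K_0$, and extend $\tilde\rho$ to the open subgroup $\tilde K := Z\cdot K_0$ (where $Z$ is the center of $\Gl{n}{Q}{q}$) by fixing any extension of the central character. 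The candidate map is then
\[
\rho \;\longmapsto\; \pi := \mathrm{ind}_{\tilde K}^{\Gl{n}{Q}{q}} \tilde\rho,
\]
with $\mathrm{ind}$ denoting compact induction.

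To conclude, I would verify three points. First, $\pi$ is irreducible: by Mackey's criterion for compact induction this reduces to checking that for every $g\in\Gl{n}{Q}{q}\setminus\tilde K$, the representations $\tilde\rho$ and ${}^g\tilde\rho$ of $\tilde K\cap g\tilde K g^{-1}$ share no common constituent. Via the Cartan decomposition one may take $g$ to be diagonal with $\varpi$-power entries, and then the intersection projects modulo $K_1$ into a proper standard Levi of $\Gl{n}{F}{q}$, so cuspidality of $\rho$ kills every potential intertwiner. Second, $\pi$ is (super)cuspidal: the Jacquet modules of $\pi$ against proper parabolics of $\Gl{n}{Q}{q}$ are assembled, via a second Mackey computation, out of Jacquet restrictions of $\rho$, which vanish by cuspidality. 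Third, the assignment $\rho\mapsto\pi$ is injective: $\rho$ can be recovered from $\pi$ as the $\Gl{n}{F}{q}$-isotypic component of $\pi^{K_1}$, so non-isomorphic $\rho$'s yield non-isomorphic $\pi$'s.

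The main obstacle is the Mackey intertwining computation in the first step. Once one isolates the relevant double-coset representatives, one must trace how conjugation by a $\varpi$-diagonal element acts on the inflated $\tilde\rho$ and identify, modulo $K_1$, a proper Levi of $\Gl{n}{F}{q}$ to which this conjugate restricts; the essential input is that a cuspidal $\rho$ (as opposed to a merely irreducible one) has vanishing Jacquet module on every proper parabolic, which is exactly what kills the potential intertwiners. Everything else in the argument is routine bookkeeping once this calculation is in hand.
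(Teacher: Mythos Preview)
Your approach is essentially the paper's: both inflate $\rho$ to $K_0=\mathrm{GL}_n(\mathcal{O})$, extend to $ZK_0$, compactly induce, and use Mackey theory together with the cuspidality of $\rho$ to kill the intertwiners coming from non-trivial double cosets. The only cosmetic differences are that the paper packages irreducibility and injectivity into a single computation $\langle x,x'\rangle=\langle\pi,\pi'\rangle$ (rather than recovering $\rho$ from $\pi^{K_1}$), and that what you describe as a ``proper standard Levi'' is in fact a proper parabolic of $\Gl{n}{F}{q}$ whose unipotent radical is the part on which cuspidality actually bites.
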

 This map takes a representation of $\Gl{n}{F}{q}$, considers it as a representation of $\Gl{n}{Z}{q}$, extends it trivially on the center of $\Gl{n}{Q}{q}$ and then inducts it to the whole group. This lemma is part of the theory around Moy-Prassad filtration and follows from \cite[Proposition 6.6]{jes-moy}, for completeness we give proof for the irreducibility and the injectivity (the cuspidality is unimportant for us).
 
 \begin{proof}
 Let $\pi$ be a cuspidal irreducible representation of $\Gl{n}{F}{q}$, let $\tilde{\pi}$ be it's extension to $K = \Gl{n}{Z}{q} \cdot Z(\Gl{n}{Q}{q})\cong \Gl{n}{Z}{q} \times \bb{Z}$ where $\bb{Z}$ acts trivially, and let $x = Ind_{K}^{G} \tilde{\pi}$ be the induction to $G = \Gl{n}{Q}{q}$.  
 Let $\pi',\tilde{\pi'},x'$ be the same for another irreducible cuspidal representation $\pi'$, let us calculate:
 $$ \InPr{x}{x'}$$
 Using mackeys theory we have:
 $$ \InPr{x}{x'} = \InPr{\tilde{\pi}}{x'|_{K}} = \sum_{g\in K\backslash G / K} \InPr{\tilde{\pi}}{Ind_{K\cap K^{g}}^{K}\tilde{\pi'}^{g}|_{K\cap K^{g}}}=$$
$$  = \sum_{g\in K\backslash G / K} \InPr{\tilde{\pi}|_{K\cap K^{g}}}{\tilde{\pi'}^{g}|_{K\cap K^{g}}}$$
Now by definition we know that matrices in $K_{1}=I + pM_{n}(\bb{Z}_{q})$ act trivially on $\tilde{\pi}$, and so if there is a map from $\tilde{\pi}|_{K\cap K^{g}}$ it must land on the $K_{1}\cap K^{g}$ invariants. translating to our case we see it must land on the $K\cap K_{1}^{g}$ invariants of $\tilde{\pi'}$, which is the same as the $(K\cap K_{1}^{g})/K_{1}$ invariants of $\pi'$.

Now it is well known that $(K\cap K^{g})/K_{1}$ is a parabolic subgroup, and $(K\cap K_{1}^{g})/K_{1}$ is it's unipotent subgroup, and since $\pi'$ is cuspidal, unless this is the trivial parabolic and unipotent subgroup, there are no invariant, in other words $\InPr{\tilde{\pi}|_{K\cap K^{g}}}{\tilde{\pi'}^{g}|_{K\cap K^{g}}}=0$ unless $K^g = K$, but since the normalizer of $\Gl{n}{Z}{q}$ is $K$ we see that this only happens when $g\in K$. 

In conclusion we see that:
$$\InPr{x}{x'} = \InPr{\tilde{\pi}}{\tilde{\pi'}} = \InPr{\pi}{\pi'}$$
In particular if $\pi,\pi'$ are different then so is $x,x'$, and since $\pi$ is irreducible then $\InPr{x}{x}=1$ and therefore $x$ is irreducible.
 \end{proof}
 \begin{remark}
Proving that $x$ is cuspidal can be done very similarly, as it amounts to calculating $\InPr{1_{U}}{x|_{U}}$ which again can be done by Mackey theory.
 \end{remark}
 
 Now we are ready to prove:
 \begin{theorem} \label{cus-case}
 Let $\pi$ be a cuspidal irreducible representation of $\Gl{n}{F}{q}$, $A\in \Gl{n}{F}{q}$ such that $\sigma_{A}$ is an involution of $\Gl{n}{F}{q}$, and assume that $\pi$ is $H_{A}$ distinguished, then $\pi$ is self dual.
 \end{theorem}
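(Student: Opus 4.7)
The plan is to reduce the theorem to Theorem~\ref{jaq-ral} through the lifting lemma just established. Since $q$ is odd and $A^2 = I$, the matrix $A$ is diagonalizable over $\bb{F}_q$ with eigenvalues in $\{\pm 1\}$; write $A = B A_0 B^{-1}$ with $A_0 = \rmf{diag}(I_p, -I_q)$. Conjugation by $B^{-1}$ identifies $\pi^{H_A}$ with $\pi^{H_{A_0}}$, so without loss of generality we may assume $A = A_0$, in which case $H_A = \Gl{p}{F}{q} \times \Gl{q}{F}{q}$ is the standard block-diagonal Levi subgroup. This is exactly the setting in which Theorem~\ref{jaq-ral} applies over $\bb{Q}_q$.

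Let $x = \rmf{Ind}_K^G \tilde{\pi}$ be the cuspidal lift from the previous lemma, with $G = \Gl{n}{Q}{q}$ and $K = \Gl{n}{Z}{q} \cdot Z(\Gl{n}{Q}{q})$. The main step is to show that $x$ is $\tilde{H}$-distinguished for $\tilde{H} = \Gl{p}{Q}{q} \times \Gl{q}{Q}{q}$. Decomposing $x|_{\tilde{H}}$ via Mackey and applying Frobenius reciprocity to the identity double coset in $\tilde{H}\backslash G / K$, a nonzero $(\tilde{H} \cap K)$-invariant functional on $\tilde{\pi}$ produces a nonzero $\tilde{H}$-invariant functional on $x$. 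A direct computation gives $\tilde{H} \cap K = Z(\Gl{n}{Q}{q}) \cdot (\Gl{p}{Z}{q} \times \Gl{q}{Z}{q})$, which surjects onto $H_A \subset \Gl{n}{F}{q}$ under reduction modulo $q$. Since $\tilde{\pi}$ has trivial central character and is inflated from $\pi$ through this reduction, any $H_A$-invariant functional on $\pi$ lifts to a $(\tilde{H} \cap K)$-invariant functional on $\tilde{\pi}$, so $x$ is $\tilde{H}$-distinguished.

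By Theorem~\ref{jaq-ral}, $x \cong x^*$. To descend this to $\pi$ we use that the lift is compatible with taking contragredients: the lift of $\pi^*$ is $x^*$, which follows from the unimodularity of $K$ and $G$ together with the observation that the trivial extension to the center commutes with duality. The injectivity of the lift from the previous lemma then gives $\pi \cong \pi^*$.

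The main technical obstacle is the Mackey/Frobenius step -- specifically identifying $\tilde{H} \cap K$ correctly and confirming that the identity double coset really does yield a nontrivial invariant functional (rather than cancelling against contributions from other double cosets in some limiting sense). The compatibility of the lift with contragredients is routine in this setting and can be verified directly, much in the spirit of the inner-product calculation appearing in the previous lemma.
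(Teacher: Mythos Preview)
Your argument is correct and follows essentially the same route as the paper's own proof: lift $\pi$ to the supercuspidal $x$ of $\Gl{n}{Q}{q}$, use the identity double coset in Mackey's formula to transport the $H_A$-invariant functional on $\pi$ to an $\tilde H$-invariant functional on $x$, invoke Theorem~\ref{jaq-ral}, and descend via the injectivity of the lift together with its compatibility with contragredients. Your cancellation worry is unwarranted: since $\tilde H K$ is open in $G$, restriction of functions realizes $\rmf{c\text{-}ind}_{\tilde H\cap K}^{\tilde H}\tilde\pi$ as an $\tilde H$-quotient (in fact summand) of $x|_{\tilde H}$, so any $(\tilde H\cap K)$-invariant functional on $\tilde\pi$ lifts to an $\tilde H$-invariant functional on $x$ without interference from the other cosets.
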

 
 \begin{proof}
 Let $\tilde{A}$ be a lift of $A$ to $\Gl{n}{Z}{q}$, and $H_{\tilde{A}}$ be the invarinats of $\sigma_{\tilde{A}}$ in $\Gl{n}{Q}{q}$. let us show that $x$ (from the lemma) is $H_{\tilde{A}}$ distinguished.
 
 $$ \InPr{x|_{H_{\tilde{A}}}}{1_{H_{\tilde{A}}}}\geq \InPr{ind_{H_{\tilde{A}}\cap K}^{H_{\tilde{A}}} (\tilde{\pi}|_{H_{\tilde{A}}\cap K})}{1_{H_{\tilde{A}}}} = \InPr{\tilde{\pi}|_{H_{\tilde{A}}\cap K}}{1_{H_{\tilde{A}}\cap K}}$$
 
Where the inequality is by Mackey's theory.

Now $K_{1}$ acts trivially on $\tilde{\pi}$, so we can consider this as representations mod $K_{1}$, and $(H_{\tilde{A}}\cap K)/K_{1}\subseteq H_{A}$, so the existence of such a map is followed from the fact that $\pi$ is $H_{A}$ distinguished.

Now since $x$ is distinguished we know by \ref{jaq-ral} that it is self-dual (admissibility of $x$ is obvious). However both the induction and trivial extension commute with duality, and since we proved that on irreducible cuspidal representations this operation is injective, this must mean $\pi \cong \pi^{*}$ as wanted.
 \end{proof} 
 \section{The General Case}
 \label{sec:4}
 Now that we know the result for cuspidal representations, our goal is to extend from cuspidal to general irreducible representations.
 
We start by showing in \ref{Res} that if a parabolic induction is distinguished, then there is some Jacques restriction which is also distinguished (with respect to a different involution).

In \ref{PrRep} we use the restriction to show that if a primary representation is distinguished, then the cuspidal representation below it is self-dual, we then use the theory in \cite{zel} to show that this implies that the primary representation is self-dual.

Finally, in \ref{EoP} we again use the restriction to now complete the general case.
\subsection{The Geometric Lemma}
Before we start we need some results about $\rmf{H}$, and how $P$ acts on $G_{n}/H$. these results come from \cite{GL}.

\begin{lemma} \label{geo}
Let $G$ be a connected reductive algebraic group over $k$,$\theta$ an involution of $G$ defined over $k$, $H=G^{\theta}$ the fixed points of $\theta$, and $P$ a minimal parabolic over $k$. Then $P$ contains a $\theta$ stable maximal $k$ split torus $A$, and every double coset $H(k)gP(k)$ contains an element $x$ such that $x\cdot \theta(x)^{-1} \in N(A(k))$. 
\end{lemma}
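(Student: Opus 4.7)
The lemma is two independent statements, which I would prove in order: first, that $P$ contains a $\theta$-stable maximal $k$-split torus; second, that each double coset $H(k)gP(k)$ has a representative $x$ with $x\theta(x)^{-1}\in N(A)(k)$.

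For the first claim, the plan is to exploit that $\theta(P)$ is again a minimal $k$-parabolic, and to work inside the $\theta$-stable $k$-group $P\cap \theta(P)$. Pick any maximal $k$-split torus $A_0\subset P$; after conjugating within $P(k)$ we may assume $A_0\subset P\cap\theta(P)$. Then $\theta(A_0)$ is a second maximal $k$-split torus of $P\cap\theta(P)$, so by conjugacy of maximal $k$-split tori within a connected $k$-group there is $p\in(P\cap\theta(P))(k)$ with $\theta(A_0)=pA_0p^{-1}$. A cocycle-type modification (using Hilbert 90 for the centralizer of $A_0$) produces $u\in P(k)$ such that $A:=uA_0u^{-1}$ is $\theta$-stable, and minimality of $P$ forces $A$ to remain maximal $k$-split.

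For the second claim, introduce the symmetrization $\tau:G\to G$, $\tau(g)=g\theta(g)^{-1}$. Direct computation, using $\theta|_H=\mathrm{id}$, gives $\tau(hgp)=h\cdot g\tau(p)\theta(g)^{-1}\cdot h^{-1}$ for $h\in H$, $p\in P$, so moving $g$ within its double coset translates into a twisted action of $P(k)$ on $\tau(g)$. The image $\tau(g)$ lies in the $\theta$-antifixed variety $Q=\{y\in G:\theta(y)=y^{-1}\}$. The plan is then to Bruhat-decompose $\tau(g)$ relative to $(P,A)$ as $u_1 n u_2$ with $n\in N(A)$ and $u_i$ in appropriate unipotent subgroups, use the constraint $\theta(\tau(g))=\tau(g)^{-1}$ to pair $u_1$ with $u_2$, and absorb the unipotent factors by a single choice of $p\in P(k)$, leaving the new value of $x\theta(x)^{-1}$ equal to $n\in N(A)(k)$.

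The main obstacle, and the reason this is nontrivial, is rationality: over $\bar k$ the argument is essentially Richardson--Springer theory for symmetric varieties, but here every torus, conjugating element, and Bruhat cancellation must be available over $k$. Controlling this relies on $\theta$ being defined over $k$ (so it commutes with the Galois action) together with Hilbert 90 / Lang--Steinberg type vanishing for the appropriate centralizers; this is exactly the content being imported from \cite{GL}.
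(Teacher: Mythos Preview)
The paper does not actually prove this lemma: it treats it as a black box, citing \cite{GL} (Lemma~2.4 for the $\theta$-stable torus, and Section~6, specifically 6.6, for the double-coset representative). So your proposal is not in competition with a proof in the paper; you are sketching the content behind the citation.

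Your outline is, in shape, the Helminck--Wang argument that \cite{GL} carries out. The symmetrization $\tau(g)=g\theta(g)^{-1}$, the identification of the double-coset action with twisted $P(k)$-conjugation on $\tau(g)$ inside the anti-fixed set $Q=\{y:\theta(y)=y^{-1}\}$, and the Bruhat-type reduction to an element of $N(A)(k)$ are exactly the ingredients of their Section~6. For the first assertion, your plan of comparing $A_0$ and $\theta(A_0)$ inside $P\cap\theta(P)$ and then correcting by a cocycle is also the right picture. One caution: the phrase ``Hilbert~90 for the centralizer of $A_0$'' is looser than what is actually needed, since the relevant centralizer need not have vanishing $H^1$ over a general $k$; the argument in \cite{GL} is more hands-on, going through the structure of $(P\cap\theta(P))(k)$ rather than invoking a cohomological vanishing statement. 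If your intent is only to gesture at the cited proof, what you have is fine; if you mean to give a self-contained argument, that step would need to be made precise.
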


The $\theta$-stable torus part is lemma 2.4 of \cite{GL}, while the rest follows from chapter 6 (specifically 6.6).

\begin{remark}
Notice that if $P$ is any parabolic that contains a minimal $k$ parabolic then the lemma is still true.
\end{remark}

\subsection{A distinguished Restriction}
\label{Res}
  
In this section, our goal is to develop the tool that will let us prove the theorem for big representations using smaller ones which we already know the result for.

We fix some $n$ and an involution $\sigma=\sigma_{A}$ of $G_{n}$. We assume $A$ normalizes the standard torus (This is fine since $A$ is diagonizbele)

The exact statement we prove in this section is as follows:
\begin{lemma} \label{res-lem}
Let $T=\{T_{i}\}_{i\in [m]}$ be an ordered partition of $[n]$, let $\pi_{i}\in rep(L(T)_{i})$ be representations. Assume that $\pi = \prod_{i\in [m]} \pi_{i}$ is $H$ distinguished, then there exist some $A'$ that normalizes the torus,and partitions $T_{a,b}=\{i\in T_{a}|f(\tau_{A'}(i))=b\}$ of $T_{a}$, such that: $$\boxtimes_{a} \jr{T_{a,b}}{\pi_{a}}  $$ as a representation of $L(T_{a,b})$ is distinguished with respect to $\sigma_{A'}$.
\end{lemma}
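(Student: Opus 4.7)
The plan is to combine Mackey theory with the geometric lemma (Lemma \ref{geo}) to translate the $H$-invariance of $\pi$ into an $H_{A'}$-invariance of an appropriate Jacquet restriction.

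\emph{Step 1 (Mackey decomposition).} Writing $\pi = \mathrm{Ind}_P^{G_n} x$ with $x = \boxtimes_i \pi_i$ and applying Frobenius reciprocity together with the Mackey formula yields
$$\mathrm{Hom}_H(\pi, \mathbf{1}) = \bigoplus_{g \in H \backslash G_n / P} \mathrm{Hom}_{g^{-1}Hg \cap P}(x, \mathbf{1}).$$
Since $\pi$ is $H$-distinguished the left-hand side is nonzero, so fix a representative $g$ of a double coset whose summand is nonzero.

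\emph{Step 2 (producing $A'$).} By Lemma \ref{geo}, together with the remark following it (valid because $P$ contains a minimal parabolic), we may replace $g$ within its double coset so that $C := g^{-1}\theta(g)$ normalizes the standard torus. Define $A' := g^{-1}Ag = CA$. Since $\theta^2 = \mathrm{id}$ one computes $\theta(C) = \theta(g)^{-1}g = C^{-1}$, hence $ACA^{-1}=C^{-1}$, which gives $(A')^2 = CACA = C\cdot C^{-1}A \cdot A = I$; thus $\sigma_{A'}$ is an involution, and $A'$ normalizes the standard torus because both $A$ and $C$ do. Moreover conjugation by $g$ intertwines $\theta$ on $g^{-1}Hg$ with $\sigma_{A'}$ on $G_n$, so $g^{-1}Hg \cap P = P \cap H_{A'}$. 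Step 1 therefore produces a nonzero $(P \cap H_{A'})$-invariant functional $\phi$ on $x$.

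\emph{Step 3 (block structure).} Put $T_{a,b} := T_a \cap \tau_{A'}^{-1}(T_b)$. From $(A')^2 = I$ one sees that $\tau_{A'}$ sends $T_{a,b}$ bijectively to $T_{b,a}$, so the common refinement of $T$ and $\tau_{A'}(T)$ is the double-indexed partition $\{T_{a,b}\}_{a,b}$, and the associated Levi $L(\{T_{a,b}\}) = \prod_{a,b}\mathrm{GL}_{|T_{a,b}|}(\mathbb{F}_q)$ is $\sigma_{A'}$-stable. The mixed parabolic $P \cap \sigma_{A'}(P)$ then has this Levi and a $\sigma_{A'}$-stable unipotent radical.

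\emph{Step 4 (descent to Jacquet).} Since $x$ is inflated from $L = P/U$, the functional $\phi$ vanishes on $U \cap H_{A'}$ and descends to the image $\bar{Q}$ of $P \cap H_{A'}$ in $L$. A careful analysis, parallel to the Bernstein--Zelevinsky geometric lemma, shows that $\bar{Q}$ contains, for each $a$, the unipotent radical of the standard parabolic of $\mathrm{GL}_{|T_a|}(\mathbb{F}_q)$ corresponding to $\{T_{a,b}\}_b$, while its Levi part is precisely $L(\{T_{a,b}\})^{\sigma_{A'}} = H_{A'} \cap L(\{T_{a,b}\})$. Consequently $\phi$ descends further to an $H_{A'}$-invariant functional on $\boxtimes_a \jr{T_{a,b}}{\pi_a}$, as required.

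The main obstacle is step 4: one must verify that the projection of $P \cap H_{A'}$ to $L$ is large enough to cover the unipotent radical of each $\{T_{a,b}\}_b$-parabolic inside $\mathrm{GL}_{|T_a|}(\mathbb{F}_q)$. The key input is that the swap $T_{a,b} \leftrightarrow T_{b,a}$ induced by $\tau_{A'}$ means that for $a \ne a'$ the ``off-diagonal'' blocks of the mixed unipotent $U \cap \sigma_{A'}(U^-)$ can be paired by $\sigma_{A'}$, and the $\sigma_{A'}$-fixed vectors in such a pair project onto a full copy of the required unipotent block inside $L$. Once this bookkeeping is in place the conclusion is immediate.
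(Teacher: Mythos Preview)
Your approach is essentially the paper's: Mackey decomposition, then the geometric lemma to arrange that $A'=g^{-1}Ag$ normalizes the torus, then the refined partition $\{T_{a,b}\}$, then descent to the Jacquet module. Steps~1--3 match the paper line for line.

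For Step~4, which you rightly flag as the crux, the paper gives a short direct computation rather than invoking Bernstein--Zelevinsky bookkeeping. For fixed $c$ and $a<b$, take $B=I+A\in L$ with $A$ supported on $T_{c,a}\times T_{c,b}$. Then $\sigma_{A'}(A)$ is supported on $T_{a,c}\times T_{b,c}$, so $\sigma_{A'}(B)\in U$ already acts trivially on $x$; and one checks $A\cdot\sigma_{A'}(A)=\sigma_{A'}(A)\cdot A=0$, so $B$ and $\sigma_{A'}(B)$ commute and $B\,\sigma_{A'}(B)\in P\cap H_{A'}$. Since this product fixes the invariant vector $v$ and $\sigma_{A'}(B)$ acts trivially, $Bv=v$. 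This is exactly your ``$\sigma_{A'}$-fixed elements in a pair project onto the required unipotent block in $L$,'' made explicit --- so your sketch is on target. One small correction: the object $U\cap\sigma_{A'}(U^{-})$ you name is not quite what is being paired; the relevant pairing is between the in-Levi unipotent (elements of $L$ that are block upper-triangular for $\{T_{c,b}\}_b$) and its $\sigma_{A'}$-image, which lands in $U$, not between two pieces of $U$ itself.
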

\begin{remark}
Notice that this is a representation of the levy of $T_{a,b}$, which is a product of groups, and $\sigma_{A'}$ indeed act on it so the statement makes sense. While it does not act on every block separately, one can see that since $\sigma_{A'}$ is an involution it does act on pairs of blocks.
\end{remark}

We start by using Mackey theorem to get:

\begin{claim}
There is some $ PgH \in P\backslash G/H$ such that $x=\boxtimes_{a\in [m]} \pi_{a}$ is distinguished with respect to $H^{g}\cap P$.
\end{claim}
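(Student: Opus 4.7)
The plan is a direct application of the Mackey double-coset formula. By construction $\pi = \mathrm{Ind}_P^{G_n} x$, and by hypothesis $\pi$ is $H$-distinguished, so $\mathrm{Hom}_H(\pi|_H, 1_H) \neq 0$. The strategy is to rewrite $\pi|_H$ as a direct sum indexed by $P\backslash G_n / H$, apply Frobenius reciprocity summand by summand, and then observe that at least one summand must be nonzero.

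Concretely, Mackey's theorem for the finite group $G_n$ gives
\[
\mathrm{Ind}_P^{G_n} x \big|_H \;\cong\; \bigoplus_{PgH \,\in\, P\backslash G_n/H} \mathrm{Ind}_{P^{g}\cap H}^{H}\!\bigl({}^{g}x\big|_{P^{g}\cap H}\bigr),
\]
where $P^{g} = g^{-1} P g$ and ${}^g x$ is the representation of $P^g$ defined by $p \mapsto x(gpg^{-1})$. Taking $H$-invariant functionals and applying Frobenius reciprocity to each summand gives
\[
\mathrm{Hom}_H(\pi, 1_H) \;\cong\; \bigoplus_{PgH} \mathrm{Hom}_{P^{g}\cap H}\!\bigl({}^{g}x, 1\bigr) \;\cong\; \bigoplus_{PgH} \mathrm{Hom}_{P \cap H^{g}}(x, 1),
\]
where the last identification is by conjugating each Hom-space by $g$ and writing $H^{g} = g H g^{-1}$ in the paper's convention.

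Since the left-hand side is nonzero by the distinction hypothesis, at least one of the summands on the right must be nonzero. Fixing a representative $g$ of such a double coset $PgH$ yields exactly the required $(H^{g}\cap P)$-invariant functional on $x$, proving the claim.

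There is essentially no obstacle here: everything is finite-group representation theory, and the only care needed is to keep track of the side convention for $H^{g}$ versus $P^{g}$ so that the Frobenius reciprocity step comes out with the subgroup $P\cap H^{g}$ as stated. The genuine difficulty in Lemma \ref{res-lem} lies after this claim, where one must exploit Lemma \ref{geo} to replace the representative $g$ with one for which $g\,\theta(g)^{-1}$ normalizes the torus, and then analyze how $P\cap H^{g}$ decomposes via the unipotent radical $U(T)$ in order to pass from distinction of $x$ to distinction of the desired Jacquet restriction $\boxtimes_{a}\mathcal{J}_{T_{a,b}}(\pi_a)$ under the new involution $\sigma_{A'}$.
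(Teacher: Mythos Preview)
Your argument is correct and follows essentially the same route as the paper: apply Mackey's decomposition to $\mathrm{Ind}_P^{G_n} x$ restricted to $H$, use Frobenius reciprocity on each summand, and conclude that some $\mathrm{Hom}_{P\cap H^g}(x,1)$ is nonzero. The only difference is cosmetic (the paper phrases it with inner products rather than Hom-spaces), and your remarks about tracking the $H^g$ versus $P^g$ convention are apt.
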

\begin{proof}
$\pi$ is $\rmf{H}$ distinguished which means:
$$ \InPr{\pi |_{\rmf{H}}}{{1}_{\rmf{H}}}  > \neq 0 $$
Now by definition $\pi = Ind_{P}^{G}(x)$, by mackey theory we get:
$$ \pi |_{\rmf{H}} = Ind_{P}^{G}( x )|_{\rmf{H}} = \bigoplus_{g\in P\backslash G/H} Ind_{H\cap P^g}^{H} (x^g|_{H\cap P^g}) $$
Therefore:
$$ \InPr{\pi |_{\rmf{H}}}{{1}_{\rmf{H}}} = \sum_{g\in P\backslash G/H} \InPr{Ind_{H\cap P^g}^{H} (x^g|_{H\cap P^g})}{{1}_{\rmf{H}}} = $$
$$  = \sum_{g\in P\backslash G/H} \InPr{x^g|_{H\cap P^g}}{{1}_{\rmf{H \cap P^g}} } = \sum_{g\in P\backslash G/H} \InPr{x|_{H^g\cap P}}{{1}_{\rmf{H^g \cap P}}} $$
And since the LHS is positive, then so is the RHS, so one of those terms is non-zero, which is exactly what we wished to prove.
\end{proof}

Now by \ref{geo} we know that any $g \in P\backslash G/H$ can be chosen such that $g^{-1} \sigma(g)=Q(g)$ is in $N(A)$.
 
 Now $a\in H^g$ if and only if:
 $$ gag^{-1} = \sigma(gag^{-1})$$
 which is the same as saying:
 $$ a = Q(g)\sigma(a)Q(g)^{-1}$$
 which means those are exactly matrices in $P$ which are invariant under this new involution, notice that $Q(g)$ normalizes the torus, so it is a permutation up to an element of the torus.
 
 so now we know that $x$ is distinguished by matrices that are preserved by this involution, call the invariant vector $v$,and call the corresponding premutiaion $\tau$, for $a,b \in [m]$ denote by $T_{a,b}$ all $i\in [n]$ such that  $f(i)=a,f(\tau(i))=b$.
 \begin{claim}
 Let $a,b,c\in [m]$ and assume $a<b$. Let $A$ be a matrix s.t $A_{i,j}=0$ unless $f(i)=f(j)=c$ and $f(\tau(i))=a,f(\tau(j))=b$. then $(I+A)v=v$.
 \end{claim}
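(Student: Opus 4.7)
My plan is to exhibit an element $N \in H^g \cap P$ whose image under the quotient $P \twoheadrightarrow L$ is exactly $I+A$. Since the action of $P$ on $x$ (hence on $v$) factors through $L$, this will force $(I+A)v = Nv = v$.

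Let $\sigma'$ denote the new involution (conjugation by an element of $N(T)$ inducing the permutation $\tau$). The natural candidate is the $\sigma'$-symmetrization
\[
N \;=\; I + A + \sigma'(A),
\]
which satisfies $\sigma'(N) = I + \sigma'(A) + A = N$ automatically, since $\sigma'$ is an involution.

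It remains to check that $N \in P$ and that $N$ is invertible. Conjugation by an element of $N(T)$ sends the matrix entry at position $(i,j)$ to position $(\tau(i),\tau(j))$ (up to a torus scalar), so $\sigma'(A)$ is supported at positions $(i',j')$ with $f(i') = a$ and $f(j') = b$. Since $a<b$, this puts $\sigma'(A)$ strictly above the block diagonal, i.e.\ $\sigma'(A) \in U$; combined with $A$ being supported in the $(c,c)$ diagonal block, this gives $N \in P$. Invertibility is immediate from block upper-triangularity: every diagonal block of $N$ is $I$ except the $(c,c)$ block, which is $I+A$, and the latter is invertible because $A^2 = 0$. Indeed, a nonzero summand in $(A^2)_{i,k} = \sum_j A_{i,j}A_{j,k}$ would require both $f(\tau(j)) = b$ (from $A_{i,j}$) and $f(\tau(j)) = a$ (from $A_{j,k}$), contradicting $a \neq b$.

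Thus $N \in H^g \cap P$, so $Nv = v$. Since $\sigma'(A) \in U$, the image of $N$ in the Levi $L$ is exactly $I + A$, and passing to the Levi action yields $(I+A)v = v$. The conceptual heart is the symmetrization trick itself: the hypotheses on the support of $A$ are tailored so that $A + \sigma'(A)$ already lies in $P$, with its $U$-part being precisely $\sigma'(A)$; the actual verifications are then one-line checks, and the only bookkeeping is tracking how $\sigma'$ permutes matrix positions by $\tau$.
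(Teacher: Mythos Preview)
Your proof is correct and follows essentially the same approach as the paper: both construct a $\sigma'$-invariant element of $P$ whose Levi component is $I+A$, then use that $v$ is $(H^g\cap P)$-fixed. The only cosmetic difference is that the paper writes this element multiplicatively as $(I+A)(I+\sigma'(A))$ and invokes the commutativity $A\sigma'(A)=\sigma'(A)A=0$ to verify $\sigma'$-invariance, whereas you write it additively as $I+A+\sigma'(A)$ (the same element, since $A\sigma'(A)=0$) and get $\sigma'$-invariance for free, instead checking invertibility via $A^2=0$.
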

 \begin{proof}
 consider $\sigma^{g}(A)$, it is only non-zero on $(i,j)$ s.t $f(i)=a<b=f(j)$ and therefore $1+\sigma^{g}(A)$ is in $U$, which acts trivially on all of $x$. Furthermore I claim that $\sigma^g (A)$ and $A$ commute, for this consider $T_{a,b}$ as a vector space spanned by those coordinates, $A$ is a map from $T_{c,a}$ to $T_{c,b}$ while $\sigma^g(A)$ is a map from $T_{a,c}$ to $T_{b,c}$, since we can't have that $T_{b,c}=T_{c,a}$ or $T_{c,b}=T_{a,c}$ then the composition of $A$ and $\sigma^g (A)$ is zero either way.

 Now denote $B = 1+A$, we see that $B\sigma^g(B)=\sigma^g(B) B = \sigma^g(B\sigma^g(B))$ and therefore $B\sigma^g(B)v=v$ but $\sigma^g(B)v=v$ so we get $Bv=v$
 \end{proof}

 
 \begin{corollary}
 If $A$ is a matrix s.t $A_{i,j}=0$ unless $f(i)=f(j)$ and $f(\tau(i))<f(\tau(j))$ then $(1+A)v=v$. 
 \end{corollary}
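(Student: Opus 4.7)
The plan is to decompose $A$ into a sum of matrices of the special form handled in the previous claim, and to realize $I+A$ as a product (in a carefully chosen order) of the corresponding elementary factors, each of which fixes $v$. For each $b \in [m]$ set $A^{(b)}$ to be the restriction of $A$ to entries $(i,j)$ with $f(\tau(j)) = b$, and for each triple $(c,a,b)$ with $a < b$ let $A^{(c,a,b)}$ be the further restriction to rows $i$ with $f(i) = c$ and $f(\tau(i)) = a$ (the condition $f(j) = c$ is then automatic from $f(i)=f(j)$). Then $A = \sum_b A^{(b)} = \sum_{c,\,a<b} A^{(c,a,b)}$, and each $A^{(c,a,b)}$ satisfies the hypothesis of the previous claim.

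First I would check that $A^{(b)} A^{(b')} = 0$ whenever $b \ge b'$: an index $l$ contributing to the matrix product would have to satisfy $f(\tau(l)) = b$ from the left factor and $f(\tau(l)) < b'$ from the right, which is impossible. Expanding $\prod_{b=m-1}^{1}(I + A^{(b)})$ in strictly decreasing order of $b$, every monomial of length at least two vanishes, so the whole product collapses to $I + \sum_b A^{(b)} = I + A$.

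Next, for fixed $b$ the pieces $\{A^{(c,a,b)}\}_{c,\,a<b}$ pairwise annihilate, since a contributing intermediate index would need to lie in both $T_{c,b}$ and $T_{c',a'}$, forcing $b = a'$ and contradicting $a' < b$. Hence $I + A^{(b)} = \prod_{c,\,a<b}(I + A^{(c,a,b)})$ in any order, and by the previous claim each factor fixes $v$, so $(I + A^{(b)})v = v$ for every $b$; combining this with the outer factorisation above gives $(I+A)v = v$.

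The whole argument is essentially bookkeeping in the nilpotent associative algebra of matrices supported on the prescribed set of entries, and I do not foresee any real obstacle. The only subtle point is that the outer product over $b$ must be taken in \emph{decreasing} order of $b$ so that all cross-terms vanish; the opposite ordering fails to reproduce $I+A$.
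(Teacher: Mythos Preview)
Your argument is correct and is precisely a fleshed-out version of the paper's one-line proof, which simply asserts that the group of such $I+A$ is generated by the matrices from the previous claim. Your explicit two-stage factorisation (first by $b=f(\tau(j))$ in decreasing order, then by $(c,a)$ for fixed $b$) is exactly the kind of computation one would do to verify that assertion, so the two proofs are essentially the same.
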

 \begin{proof}
 This group is obviously generated by the matrices of the previous claim.
 \end{proof}
 Now consider the levy group $L'$ corresponding to the $T_{a,b}$, obviously $L'\subseteq L$, and the unipotent factor is exactly the matrices of the corollary, so we conclude that the Jacques restriction from $L$ to $L'$ of $x$ is distinguished with respect to $\sigma^{g}$.

\subsection{PSH-algebras}
\label{PSH}

To complete the proof we need to review the way in which general represntaions are built up from cuspidal ones, for that we review the description in \cite{zel}.
\begin{definition}
A Hopf algebra over a comutative ring $K$ is a graded $K$ module $R=\bigoplus_{n\geq0} R_{n}$ with graded morphisms (over $K$) $m:R\otimes_{K} R\rightarrow R$, $m^*:R\rightarrow R\otimes_{K} R$, $e:K\rightarrow R$, $e^*:R\rightarrow K$ s.t $m,e$ turn $R$ to a ring, $m^*,e^*$ turns it into a co-algebra, and $m^*$ is a ring map.
\end{definition} 
In other words, a Hopf algebra is something that is both an algebra and a co-algebra, and both structures commute.
\begin{definition}
A PSH-algebra is a Hopf algebra over $\mathbb{Z}$ together with a free homogeneous basis for $R$ (as an abelian group), s.t $R_{0}\cong \mathbb{Z}$ with isomorphisms $e,e^*$, all the maps of the Hopf algebra take basis elements to a positive-sum of basis elements, and with respect to the inner product in which our basis is an orthonormal basis, $m,m^*$ and $e,e^*$ are adjoint pairs.
\end{definition}
One should think of examples of the form $R_{n}=R(G_{n})$ where $R(G)$ is the ring of virtual representations, the free basis is just irreducible representations,  the multiplication and co-multiplication will be a form of induction and restriction, the positivity will correspond to the fact that those operations take actual representations to actual representations, the adjointness will correspond to the  adjointness  (as functors) of induction and restriction, and the most non-trivial part will be the fact that multiplication and co-multiplication commute, which will follow from Mackey's theory. 
\begin{claim}
If we denote $G_{n}=S_{n}$ or $G_{n}=\Gl{n}{F}{q}$ for all $n$, then $R=\bigoplus_{n\geq 0} R(G_{n})$ is a PSH-algebra. In the case of $S_{n}$ the multipication and co-multipication comes from the natural inclusion $S_{n}\times S_{m} \subseteq S_{n+m}$ and the usual induction and restriction. In the case of $\Gl{n}{F}{q}$ it is the same but with parabolic induction and Jacques restriction.
\end{claim}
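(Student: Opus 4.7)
The plan is to verify each axiom of a PSH-algebra for $R=\bigoplus R(G_{n})$ in turn. First, I would pin down the structure maps: the multiplication $m\colon R_{n}\otimes R_{m}\to R_{n+m}$ sends $\pi_{1}\otimes \pi_{2}$ to the (parabolic) induction of the outer tensor product $\pi_{1}\boxtimes \pi_{2}$ from the standard Young or parabolic subgroup to $G_{n+m}$; the comultiplication $m^{*}\colon R_{n}\to \bigoplus_{k+l=n}R_{k}\otimes R_{l}$ collects the (Jacquet) restrictions to all such subgroups; the unit $e$ sends $1$ to the trivial representation of $G_{0}$, and $e^{*}$ projects onto the degree-zero component. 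Since $G_{0}$ is trivial, the identification $R_{0}\cong \mathbb{Z}$ is immediate.

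Next I would check the graded ring and graded coring axioms separately. Associativity of $m$ follows from transitivity of (parabolic) induction applied to nested parabolic subgroups, and coassociativity of $m^{*}$ follows dually from transitivity of (Jacquet) restriction; the unit/counit axioms follow from the triviality of $G_{0}$. Positivity is then clear: induction and restriction send honest representations to honest representations, so when expressed in the basis of irreducibles, all coefficients are non-negative integers. Adjointness of $m$ and $m^{*}$ with respect to the inner product $\langle V,W\rangle=\dim \mathrm{Hom}_{G}(V,W)$ is Frobenius reciprocity; in the $\mathrm{GL}_{n}(\mathbb{F}_{q})$ case one uses that the unipotent radical is averaged by a projection, so parabolic induction and Jacquet restriction remain biadjoint. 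The adjointness of $e$ and $e^{*}$ is the trivial identity $\langle 1_{G_{0}},1_{G_{0}}\rangle =1$.

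The main obstacle is the compatibility axiom, namely that $m^{*}$ is a ring map, which is the Hopf pentagon relating $m^{*}\circ m$ to $(m\otimes m)\circ (1\otimes \mathrm{swap}\otimes 1)\circ (m^{*}\otimes m^{*})$. Concretely, given $\pi\in R(G_{a+b})$ obtained by induction from $G_{a}\times G_{b}$, one must compute its Jacquet restriction to $G_{c}\times G_{d}$ with $c+d=a+b$. I would carry this out by a double-coset decomposition: parameterize $(P_{a,b}\backslash G_{a+b}/P_{c,d})$ by the appropriate Weyl-group combinatorics (shuffles in the $S_{n}$ case, or the analogous $W$-minimal representatives in the $\mathrm{GL}_{n}(\mathbb{F}_{q})$ case), and show that each double coset contributes exactly one of the terms on the right-hand side of the pentagon identity. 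In the symmetric group case this is the classical Mackey formula; in the general linear group case it is the finite-field geometric lemma, which is well known and whose local-field analog was recalled as Lemma~\ref{geo} above. Once the double cosets are indexed correctly and the summands matched term by term, the compatibility follows, completing the verification that $R$ is a PSH-algebra.
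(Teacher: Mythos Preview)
Your proposal is correct and follows exactly the outline the paper itself gives: the paper does not supply a formal proof of this claim but instead, in the paragraph immediately preceding it, sketches the same ingredients you list (irreducibles as the basis, positivity because induction and restriction send honest representations to honest representations, adjointness from Frobenius reciprocity, and the Hopf compatibility from Mackey theory), deferring the details to \cite{zel}.

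One small correction worth flagging: the double-coset computation you invoke for the bialgebra axiom is the parabolic--parabolic Mackey decomposition (often called the Bernstein--Zelevinsky geometric lemma), which is \emph{not} what Lemma~\ref{geo} of this paper states. Lemma~\ref{geo} is the Helminck--Wang result about $P$-orbits on a symmetric space $G/H$, i.e.\ about $P\backslash G/H$ with $H=G^{\theta}$, whereas what you need here is the structure of $P_{a,b}\backslash G_{a+b}/P_{c,d}$ for two parabolics. Your underlying argument is fine; only the cross-reference is misplaced.
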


It turns out that PSH-algebras have a very simple structure theorem.
\begin{definition}
    A basis element of a PSH-algebra is called irreducible, an element $x$ s.t $m^*(x)=x\otimes 1 + 1 \otimes x$ is called primitive. 
\end{definition}
In the example of $\Gl{n}{F}{q}$ we see that the basis elements are exactly irreducible representations, and it's easy to see that the definition of primitive is the same as the usual definition of a cuspidal representation.
\begin{theorem}
Let R be a PSH-algebra, denote $\mathscr{C}$ as the set of irreducible primitive elements, then $R\cong \bigotimes_{\rho \in \mathscr{C}} R(\rho)$ where $R(\rho)$ is a PSH-algebra with only one irreducible primitive element (and is in fact the PSH-algebra generated by $\rho$).  
\end{theorem}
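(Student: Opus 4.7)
The plan is to follow the standard Zelevinsky strategy, decomposing $R$ using positivity and self-adjointness into pieces each controlled by a single primitive irreducible. The main ingredients are (i) the Hopf compatibility $m^*(xy)=m^*(x)m^*(y)$, (ii) the fact that $m$ and $m^*$ are adjoint with respect to the distinguished inner product, and (iii) strict positivity of all structure constants in the distinguished basis. I would proceed by induction on degree.

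First I would define, for each irreducible primitive $\rho$, the subspace $R(\rho)\subseteq R$ spanned by those basis elements $x$ such that every irreducible appearing (with positive coefficient) in iterated comultiplications $(m^*)^{k}(x)$ is a tensor of basis elements all lying in the subalgebra generated by $\rho$. Using the Hopf axiom and positivity, I would check that $R(\rho)$ is a sub-PSH-algebra with distinguished basis, and that $\rho$ is its unique irreducible primitive. The key orthogonality statement I would want is: if $\rho_1\neq\rho_2$ and $x\in R(\rho_1)$, $y\in R(\rho_2)$ are basis elements of the same positive degree, then $\langle x,y\rangle=0$. This is the natural induction hypothesis, verified in degree $1$ because distinct primitives are distinct basis elements.

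Next I would consider the multiplication map
\[
\Phi:\bigotimes_{\rho\in\mathscr{C}}R(\rho)\longrightarrow R,\qquad \Phi\bigl(\textstyle\bigotimes x_\rho\bigr)=\prod_\rho x_\rho,
\]
which is a ring map by construction and a coring map thanks to the Hopf axiom; it is also positive. The inductive step that the hard part hinges on is inner product preservation. Writing basis elements $a=\prod x_\rho$ and $b=\prod y_\rho$ in the image, I would use
\[
\langle ab,cd\rangle=\langle a\otimes b,\,m^*(cd)\rangle=\langle a\otimes b,\,m^*(c)\,m^*(d)\rangle
\]
together with the induction hypothesis to show that the cross-terms between different $R(\rho)$'s drop out, so that $\langle\Phi(\bigotimes x_\rho),\Phi(\bigotimes y_\rho)\rangle=\prod_\rho\langle x_\rho,y_\rho\rangle$. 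Since the left side lands on positive integer combinations of basis elements of $R$ and the right side is precisely the pairing on the tensor product basis, $\Phi$ sends basis vectors to basis vectors and is isometric, hence injective.

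Finally, surjectivity is the step I expect to be the main technical obstacle: one must show that every irreducible basis element of $R$ belongs to $\Phi\bigl(\bigotimes_\rho R(\rho)\bigr)$. The standard argument uses positivity and induction on degree: given an irreducible $z\in R_n$ with $n\geq 1$, decompose $m^*(z)$; positivity of structure constants forces some homogeneous component to involve basis elements of strictly smaller degree, which by induction lie in the image of $\Phi$, and then another application of adjointness together with the Hopf axiom forces $z$ itself into the image. Combined with injectivity and the fact that $\Phi$ already respects the full PSH-structure, this gives the desired isomorphism $R\cong\bigotimes_{\rho\in\mathscr{C}}R(\rho)$, with each $R(\rho)$ generated by $\rho$ and having $\rho$ as its only irreducible primitive element.
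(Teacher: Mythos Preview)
The paper does not actually prove this theorem: it appears in the review subsection on PSH-algebras and is quoted from Zelevinsky's monograph \cite{zel} without argument. There is therefore no ``paper's own proof'' to compare your attempt against.

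That said, your outline follows the standard Zelevinsky strategy and is broadly correct. A couple of places would need tightening if you were writing this out in full. Your definition of $R(\rho)$ is slightly circular as stated (you describe it via ``the subalgebra generated by $\rho$'' before that object has been pinned down); the clean formulation is to define $R(\rho)$ as the span of those irreducibles $x$ whose total iterated comultiplication into degree-one pieces involves only tensor powers of $\rho$. The surjectivity step you flag as ``the main technical obstacle'' is indeed where the work lies: one shows that for any irreducible $z$, the primitives appearing in the degree-one slots of $m^*(z)$ determine a finite set $\{\rho_1,\dots,\rho_k\}$, and then uses adjointness plus positivity to exhibit $z$ inside a product $\prod z_{\rho_i}$ with each $z_{\rho_i}\in R(\rho_i)$; irreducibility of $z$ and the isometry already established force $z$ to equal such a product. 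Your sketch gestures at this but does not quite carry it out.
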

In the theorem one has to be careful, as there is an infinite tensor product, we mean by it the colimit over finite tensors, where maps are induced by the product with the unit.

The algebra generated by $\rho$ is easily described, for each $\rho^n$ we write it as a sum of basis elements, and then take the space generated by those basis elements.

This theorem in particular says that:
\begin{claim}
For a PSH-algebra, we call an element $\rho$-primary if it is in $R(\rho)$, then every irreducible element can be uniquely written as a product of primary irreducible elements over different $\rho$.
\end{claim}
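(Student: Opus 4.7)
The plan is to derive this claim directly from the structure theorem $R \cong \bigotimes_{\rho \in \mathscr{C}} R(\rho)$ stated just above it, by tracking what happens to basis elements under that isomorphism. The key point is that the structure theorem is not merely an isomorphism of Hopf algebras but of PSH-algebras, so it identifies the distinguished positive basis of $R$ with the tensor-product basis built from the bases of the factors $R(\rho)$.

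First I would unpack the infinite tensor product. Interpreting $\bigotimes_{\rho} R(\rho)$ as the colimit of finite tensor products, where the transition maps are given by tensoring with the unit $1 \in R(\rho)_{0}$, a free $\mathbb{Z}$-basis for the colimit is given by all pure tensors $\bigotimes_{\rho} x_{\rho}$ where each $x_{\rho}$ runs over the chosen basis of $R(\rho)$ and $x_{\rho}=1$ for all but finitely many $\rho$. Since the isomorphism of the structure theorem is a PSH-isomorphism, it sends basis elements of $R$ bijectively to elements of this tensor-basis. Hence every irreducible $x \in R$ corresponds to a unique family $(x_{\rho})_{\rho}$ of irreducibles $x_{\rho} \in R(\rho)$ with $x_{\rho}=1$ for almost all $\rho$.

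Next I would translate the tensor-product expression back into a product inside $R$. Under the isomorphism, the pure tensor $\bigotimes_{\rho} x_{\rho}$ is the image of the product $\prod_{\rho} x_{\rho}$ taken in $R$ via the multiplication $m$ (a finite product, since almost all factors are $1$). Each $x_{\rho}$, viewed inside $R$, lies in the subalgebra $R(\rho)$ and is irreducible, hence is a $\rho$-primary irreducible in the sense of the definition. This gives the existence part: $x = \prod_{\rho} x_{\rho}$ with each factor a primary irreducible over a distinct $\rho$.

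For uniqueness, suppose $x = \prod_{\rho} y_{\rho}$ is any other such decomposition with $y_{\rho} \in R(\rho)$ irreducible and $y_{\rho}=1$ for almost all $\rho$. Applying the isomorphism of the structure theorem, $y_{\rho}$ maps into the $\rho$-tensor factor, so $\prod_{\rho} y_{\rho}$ maps to the pure tensor $\bigotimes_{\rho} y_{\rho}$. Since tensor-product basis elements are linearly independent, comparing with $\bigotimes_{\rho} x_{\rho}$ forces $y_{\rho} = x_{\rho}$ for every $\rho$. The only step that requires any real care is the first one, namely verifying that the PSH-structure genuinely matches the tensor basis with the basis of $R$; but this is built into the statement of the structure theorem (the isomorphism preserves the positive basis), so no further work is needed and the claim follows.
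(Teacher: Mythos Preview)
Your proposal is correct and matches the paper's approach exactly: the paper presents this claim as an immediate corollary of the structure theorem $R\cong\bigotimes_{\rho}R(\rho)$ (introducing it with ``This theorem in particular says that'') and gives no further argument, so you have simply spelled out the details that the paper leaves implicit.
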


If we apply this to the PSH-algebra of $\Gl{n}{F}{q}$ we get the same statement for representations of $\Gl{n}{F}{q}$, where the product is parabolic induction, and the $\rho$ are irreducible cuspidal representations.

The theorem is complemented by:
\begin{theorem}
All PSH-algebra with one irreducible primitive element of degree $1$ are isomorphic, the isomorphism is not unique, but there are only two isomorphisms. 
\end{theorem}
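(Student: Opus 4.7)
The plan is to match any such $R$ with the algebra $\Lambda = \bigoplus_n \Lambda_n$ of symmetric functions, regarded as a PSH-algebra with Schur basis $\{s_\lambda\}$ indexed by partitions, and then to show that the PSH-automorphism group of $\Lambda$ has exactly two elements: the identity and the transpose involution $\omega : s_\lambda \mapsto s_{\lambda'}$. First I would let $\rho \in R_1$ be the unique primitive irreducible. Because $m^*$ is a ring map and $m^*(\rho) = \rho \otimes 1 + 1 \otimes \rho$, one obtains $m^*(\rho^n) = \sum_k \binom{n}{k}\, \rho^k \otimes \rho^{n-k}$ and hence, by adjointness, $\langle \rho^n, \rho^n \rangle = n!$. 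Expanding $\rho^n = \sum_\pi d_\pi \pi$ over the basis and applying positivity gives $\sum_\pi d_\pi^2 = n!$, matching $\sum_{\lambda \vdash n}(f^\lambda)^2 = n!$ on the $\Lambda$ side and strongly suggesting that the irreducible basis $\mathcal{B}_n$ of $R_n$ should be in bijection with partitions of $n$.

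To produce such a bijection I would organise $\mathcal{B} = \bigsqcup_n \mathcal{B}_n$ into a graded graph by drawing an edge from $\pi \in \mathcal{B}_n$ to $\pi' \in \mathcal{B}_{n+1}$ with multiplicity $\langle \rho \cdot \pi, \pi' \rangle$, which equals $\langle \pi \otimes \rho,\, m^*(\pi') \rangle$ by self-adjointness. Positivity combined with the explicit primitive formula for $m^*(\rho^n)$ constrains these multiplicities to be $0$ or $1$ and forces the graph to satisfy Fomin's axioms for a differential poset with a unique minimum in degree $0$; the unique such poset is Young's lattice. Labelling $\mathcal{B}_n = \{\pi_\lambda\}_{\lambda \vdash n}$ accordingly, I would then define $\phi : R \to \Lambda$ by $\phi(\pi_\lambda) = s_\lambda$. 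Multiplication by $\rho$ matches the Pieri rule by construction, and $\rho$ generates $R$ as a ring because $\rho^n$ has positive pairing with every irreducible, so $\phi$ is a ring isomorphism; comultiplication, positivity, and the inner product are then forced by adjointness together with the basis matching.

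Finally, the two-isomorphism statement would follow by observing that the bijection $\mathcal{B}_n \leftrightarrow \{\lambda \vdash n\}$ is canonical only up to the unique nontrivial rank-preserving automorphism of Young's lattice, namely $\lambda \mapsto \lambda'$; on the $\Lambda$ side this is the classical involution $\omega$, a PSH-automorphism that fixes $s_{(1)}$. The main obstacle is the middle step: proving that the Pieri multiplicities are $0/1$ and that the graded graph produced is forced to be Young's lattice, with no other combinatorial solutions. This is where all of the PSH axioms (integrality, positivity, self-adjointness, and compatibility of $m$ with $m^*$) have to be used simultaneously, and it is essentially equivalent to the classical characterisation of Young's lattice as the only $1$-differential poset with a single minimum.
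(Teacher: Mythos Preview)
The paper does not supply a proof of this theorem; it is quoted as part of the review of Zelevinsky's theory from \cite{zel}, so there is no argument in the paper to compare against. I will therefore comment on your sketch on its own merits.

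Your outline has the right overall architecture (identify $R$ with the ring of symmetric functions, then classify PSH-automorphisms), and the computation $\langle \rho^n,\rho^n\rangle=n!$ is correct and is indeed the first signal that the basis of $R_n$ should be indexed by partitions of $n$. However, the step you flag as ``the main obstacle'' contains a genuine error rather than merely a difficulty. You write that the graded graph on $\mathcal{B}$ with edge multiplicities $\langle\rho\cdot\pi,\pi'\rangle$ is forced to be Young's lattice because Young's lattice is ``the only $1$-differential poset with a single minimum''. This is false: the Young--Fibonacci lattice is another $1$-differential poset with a unique minimal element, and in fact there are infinitely many such posets. The differential-poset axioms by themselves (equivalently, the up/down operators coming from multiplication by $\rho$ and its adjoint) do not determine $R$.

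What actually pins down the structure in Zelevinsky's argument is the full Hopf-algebra compatibility, not just the Pieri edges. One constructs inside $R$ explicit sequences of irreducibles playing the role of the complete and elementary symmetric functions (these are the elements $x_n,y_n$ that the paper alludes to in the claim immediately following the theorem, with $\rho^2=x_2+y_2$), proves from positivity and the bialgebra axiom that $R$ is a polynomial ring on either sequence, and then matches with $\Lambda$. The ambiguity in choosing which of $x_2,y_2$ to call ``complete'' is precisely the source of the two isomorphisms. So your plan needs to replace the appeal to differential-poset uniqueness with this construction; as written, the middle step does not go through.
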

First, notice that the restriction on the degree is not essential, we saw that $R(\rho)$ only has elements of degree $n\cdot deg(\rho)$, so we can just divide all the degrees by $deg(\rho)$. Also notice that for $S_{n}$ there are no irreducible primitive elements except the trivial representation of $S_{1}$, so this gives an example of this unique algebra.

We can also describe the only automorphism:
\begin{claim}
If $R$ has only one primitive irreducible element $\rho$, then $\rho^2 = x_{2}+y_{2}$ where $x_{2},y_{2}$ are irreducible, the unique non trivial automorphism of $R$ swaps $x_{2},y_{2}$, In particular an isomorphism of this algebra is defined by what it does to $y_{2}$.
\end{claim}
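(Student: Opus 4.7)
The plan is to reduce all three assertions to one norm computation: if $\InPr{\rho^2}{\rho^2} = 2$, then positivity of the basis decomposition immediately forces $\rho^2$ to split into two distinct irreducibles, and the two remaining claims about automorphisms follow formally from the preceding uniqueness theorem.

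For the norm computation, since $m^*$ is a ring map (a PSH-axiom) and $\rho$ is primitive,
\[
m^*(\rho^2) \,=\, m^*(\rho)^2 \,=\, (\rho\otimes 1 + 1\otimes \rho)^2 \,=\, \rho^2\otimes 1 \,+\, 2\,\rho\otimes \rho \,+\, 1\otimes \rho^2 .
\]
Equipping $R \otimes R$ with the tensor-product inner product (so that pure tensors of basis elements are orthonormal), the adjunction $\InPr{xy}{z} = \InPr{x\otimes y}{m^*(z)}$ together with orthogonality across distinct bi-degrees (the first and last summands live in $R_{2d}\otimes R_0$ and $R_0\otimes R_{2d}$ respectively, hence pair trivially with $\rho\otimes \rho \in R_d \otimes R_d$) yields
\[
\InPr{\rho^2}{\rho^2} \,=\, \InPr{\rho\otimes\rho}{m^*(\rho^2)} \,=\, 2\,\InPr{\rho}{\rho}^2 \,=\, 2 .
\]
Writing $\rho^2 = \sum_i c_i\, e_i$ in the distinguished basis, positivity forces $c_i \in \mathbb{Z}_{\geq 0}$, and $\sum_i c_i^2 = 2$ then forces exactly two of the $c_i$ to equal $1$ with all others vanishing. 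Thus $\rho^2 = x_2 + y_2$ with $x_2 \neq y_2$ irreducible.

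For the automorphism statement: any PSH-automorphism of $R$ permutes the basis, respects the grading, and preserves primitivity. Since by hypothesis $\rho$ is the unique primitive irreducible, every automorphism fixes $\rho$, and therefore fixes $\rho^2 = x_2 + y_2$; the only options are to fix both summands or to swap them. The identity does the former, so the non-trivial automorphism furnished by the preceding theorem must perform the swap. For the last assertion, given any isomorphism $\phi : R \to R'$ between two such PSH-algebras, $\phi(\rho) = \rho'$ (the unique primitive irreducible of $R'$), so $\phi(x_2) + \phi(y_2) = (\rho')^2 = x_2' + y_2'$, i.e.\ $\phi(x_2) = (\rho')^2 - \phi(y_2)$; combined with the two isomorphisms produced by the preceding theorem differing by this swap on $R'$, we see that $\phi$ is uniquely determined by $\phi(y_2)$.

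The only non-formal input is the norm computation, which is essentially mechanical from the PSH-axioms, so I expect no serious obstacle; the real work has already been done in the preceding uniqueness theorem.
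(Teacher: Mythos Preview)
The paper does not actually prove this claim: it is stated without proof as part of the review of Zelevinsky's PSH-algebra theory (everything in that subsection is attributed to \cite{zel}), followed only by the illustrative example of $S_n$. So there is no in-paper proof to compare against; you are supplying more than the paper does.

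Your argument for the decomposition $\rho^2 = x_2 + y_2$ via the norm computation $\InPr{\rho^2}{\rho^2}=2$ is correct and is precisely the standard route in Zelevinsky.

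For the automorphism statements there is a small logical gap. From ``any automorphism either fixes both $x_2,y_2$ or swaps them'' together with ``there are exactly two automorphisms,'' it does not yet follow that the non-identity one swaps: a priori both automorphisms could fix $x_2,y_2$ and differ only in higher degree. What is needed (and what you are tacitly using) is that a PSH-automorphism fixing $\rho$, $x_2$, and $y_2$ is already the identity. That is true, but it is essentially the content of the ``in particular'' clause you are trying to establish, so the argument as written is slightly circular. In Zelevinsky this is handled by the explicit construction: once one of $x_2,y_2$ is singled out, the isomorphism with the ring of symmetric functions is pinned down, and the swap is exhibited concretely. So the last two assertions are not purely formal consequences of the uniqueness theorem \emph{as stated} in the paper; they are really part of its proof.
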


For example, one can see that for $S_{n}$ tensoring with the sign representation is an automorphism, and it indeed swaps $x_{2},y_{2}$ which in this case are exactly the trivial and sign representations of $S_{2}$.

Another example of an automorphism comes from duality, for $S_{n}$ this is of course the identity automorphism. For $\Gl{n}{F}{q}$ this is not the case, we get a map $^*:R(\rho)\rightarrow R(\rho^*)$, and not every cuspidal is self-dual, however, we do claim:
\begin{claim}
\label{C:id}
If $\rho$ is self dual, then $^*:R(\rho)\rightarrow R(\rho^*)=R(\rho)$ is the identity.
\end{claim}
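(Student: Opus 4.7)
The plan is to invoke the classification in the preceding claim—the only PSH-algebra automorphisms of $R(\rho)$ are the identity and the nontrivial involution swapping $x_{2}$ with $y_{2}$—and then rule out the swap using dimensions.

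First I would verify that the contragredient functor $\pi \mapsto \pi^{*}$ induces a PSH-algebra automorphism of $R=\bigoplus_{n} R(G_{n})$. Positivity is clear (the dual of an irreducible is irreducible), the grading is preserved, and the natural isomorphisms $(\pi\times\sigma)^{*}\cong\pi^{*}\times\sigma^{*}$ and $\jr{T}{\pi}^{*}\cong\jr{T}{\pi^{*}}$ are the standard compatibilities of the contragredient with parabolic induction and Jacquet restriction. So ${}^{*}$ sends the PSH-subalgebra $R(\rho)$ to $R(\rho^{*})$, and when $\rho^{*}\cong\rho$ it restricts to a PSH-automorphism of $R(\rho)$; the preceding theorem then forces that restriction to be either the identity or the swap.

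To exclude the swap, I would use that the contragredient preserves the dimension of any representation, so it suffices to check $\dim x_{2}\neq\dim y_{2}$. By construction $x_{2}$ and $y_{2}$ are the two irreducible constituents of $\rho\times\rho$, and by the Howlett--Lehrer description of $\rmf{End}_{G_{2d}}(\rho\times\rho)$ as the rank-two Iwahori--Hecke algebra with parameter $q^{d}$ (where $d=\deg\rho$), the two constituents have dimensions in the ratio $1:q^{d}$; the toy case $d=1$ with $\rho$ trivial is the familiar decomposition of the principal series of $G_{2}$ into the trivial and Steinberg representations of dimensions $1$ and $q$. Since $q^{d}\neq 1$, the swap cannot preserve dimensions while the contragredient does, so ${}^{*}$ must act as the identity on $R(\rho)$. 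I expect this dimension inequality to be the only non-formal step and hence the main technical obstacle—though routine once one invokes the Howlett--Lehrer structure of the intertwining algebra; the rest is bookkeeping about how the contragredient interacts with the PSH-algebra structure.
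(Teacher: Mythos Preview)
Your argument is correct and complete: once one knows the only PSH-automorphisms of $R(\rho)$ are the identity and the $x_{2}\leftrightarrow y_{2}$ swap, any invariant of irreducibles that is preserved by duality and separates $x_{2}$ from $y_{2}$ finishes the job, and dimension does exactly that. The Howlett--Lehrer description of $\rmf{End}_{G_{2d}}(\rho\times\rho)$ as the Hecke algebra of type $A_{1}$ with parameter $q^{d}$ indeed gives the two constituents dimensions in ratio $1:q^{d}\neq 1$, so the swap is excluded.

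The paper takes a different route to the same dichotomy. Instead of comparing dimensions, it distinguishes $x_{2}$ and $y_{2}$ by genericity: choosing a \emph{real-valued} nondegenerate character $\chi$ of the unipotent radical, exactly one of the two constituents of $\rho^{2}$ carries a Whittaker vector for $\chi$; call that one $y_{2}$. Then $y_{2}^{*}$ carries a $\chi^{*}$-Whittaker functional, but $\chi^{*}=\chi$ by the real-valuedness, and over a finite group a Whittaker functional is the same datum as a Whittaker vector, so $y_{2}^{*}$ is again $\chi$-generic and hence equals $y_{2}$. This avoids importing the Hecke-algebra structure theorem and stays entirely inside elementary finite-group representation theory (uniqueness of Whittaker models for $\rho^{n}$ and the trick of picking $\chi$ real). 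Your approach is conceptually cleaner---a bare dimension count---but leans on a heavier external input; the paper's is more self-contained but requires the small observation that one can arrange $\chi=\chi^{*}$.
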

\begin{proof}
By what we said before it's enough to see what we do on the irreducible representations inside $\rho^2$, The idea is that we can distinguish between the two irreducible inside using a Whitaker character, specifically let $\chi:\mathbb{F}_{q} \rightarrow \mathbb{C}^*$ be the non-trivial real character and extend it to $U$ by $\chi(A)=\sum_{1\leq i < n} \chi(a_{i,i+1})$, for any representation we can look at the dimension of the $\chi$ equivariant vectors in it, it is known that for $\rho^{n}$ there is only one, in particular, this means exactly one of the two irreducibles in $\rho^2$ has such a vector, we call that one $y_{2}$. now notice that $y_{2}^*$ has a $\chi^*$ equivariant functional,  but $\chi^*=\chi$ since it's real, and since we are working with finite groups having an equivariant functional or vector is the same (both correspond to a map from or to the representation $\chi$). So $y_{2}^*$ has a $\chi$ equivariant vector and is, therefore, $y_{2}$, so we are done.
\end{proof}
\begin{remark}
The fact that duality is an isomorphism of the PSH-algebra is not immediate, one needs to show that it commutes with the multiplication and co-multiplication, for the usual induction and restriction this is obvious, for parabolic induction this is also easy, for Jacques restriction however this is more delicate. indeed for $\pi \in rep(\Gl{n}{F}{})$ we need to take the dual of $(\pi |_{P})^U$, which is $((\pi^*)|_{P})_{U}$ as for finite-dimensional spaces duality takes invariants to co-invariants and vice-versa, however since we work over $\mathbb{C}$ and with finite groups, we have the averaging operator which identifies invariants with co-invariants.
\end{remark}
Finally, the last claim can be restated as:
\begin{corollary}
\label{pri-du}
A $\rho$-primary representation is self-dual iff $\rho$ is.
\end{corollary}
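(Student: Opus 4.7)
The plan is to deduce this corollary directly from Claim \ref{C:id} together with the PSH-algebra decomposition, with essentially no further computation. The key preliminary observation I would record first is that the duality functor $^*$ permutes the family of sub-algebras $\{R(\sigma)\}_\sigma$: since $^*$ is a PSH-algebra automorphism of $R$ (this is the content of the remark just above), it must send the set of irreducible primitive elements to itself, and in particular $\rho \mapsto \rho^*$. Because $R(\rho)$ is characterized as the sub-PSH-algebra generated by its unique irreducible primitive $\rho$, applying $^*$ identifies it with the analogous sub-algebra on the other side, that is, $^*R(\rho) = R(\rho^*)$.

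For the ``if'' direction, assume $\rho$ is self-dual. Then $^*$ restricts to a PSH-automorphism of $R(\rho)$, and Claim \ref{C:id} identifies this restriction with the identity. Hence every element of $R(\rho)$, in particular every $\rho$-primary irreducible representation, is self-dual.

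For the ``only if'' direction, I would take a $\rho$-primary irreducible $\pi$ with $\pi^* \cong \pi$ and observe that $\pi \in R(\rho)$ while $\pi^* \in R(\rho^*)$, so $\pi \in R(\rho) \cap R(\rho^*)$. Using the tensor decomposition $R \cong \bigotimes_\sigma R(\sigma)$ from the structure theorem, if $\rho \ne \rho^*$ then the factors $R(\rho)$ and $R(\rho^*)$ meet only in the common degree-zero piece $R_0 \cong \mathbb{Z}$; since $\pi$ has positive degree, this forces $\rho = \rho^*$.

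There is no real obstacle in this argument, as Claim \ref{C:id} and the PSH structure theorem already carry all the genuine content. The one point I would be most careful about is justifying that duality really does define a PSH-algebra automorphism, since this is what lets us transport the sub-algebras $R(\rho)$ along $^*$; this is exactly the content of the remark between Claim \ref{C:id} and the corollary, where the delicate step is the compatibility of duality with Jacquet restriction, resolved via the averaging identification of invariants with coinvariants over $\mathbb{C}$.
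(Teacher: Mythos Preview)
Your proposal is correct and follows essentially the same approach as the paper: for the ``if'' direction you both invoke Claim~\ref{C:id} directly, and for the ``only if'' direction you both argue that $\pi=\pi^*$ lies in $R(\rho)\cap R(\rho^*)$, forcing $\rho=\rho^*$. The paper's version is simply terser, leaving implicit the justification (via the tensor decomposition) that this intersection is trivial in positive degree, which you spell out.
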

\begin{proof}
If $\pi \in R(\rho)$ is self dual, then $\pi=\pi^*\in R(\rho^*)$ so we must have $\rho=\rho^*$. On the other hand if $\rho=\rho^*$ then \ref{C:id} says that $^*$ is identity on $R(\rho)$ so we are done.
\end{proof}
\subsection{Primary Representations}
The goal of the section is to prove the primary case:
\begin{theorem}
If $\pi$ is a $\rho$-primary representation, and $\pi$ is $\rmf{H}$ distinguished, then $\pi$ is self-dual.
\end{theorem}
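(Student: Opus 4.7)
The goal is to prove $\pi \cong \pi^{*}$. By Corollary \ref{pri-du} this reduces to showing that the underlying cuspidal $\rho$ is self-dual, and by Theorem \ref{cus-case} it would suffice to exhibit $\rho$ itself as distinguished with respect to some involution on its own group. My plan is to extract such a distinguishedness from $\pi$ by enlarging $\pi$ to a convenient parabolic induction and then feeding it into the restriction lemma.

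Let $m = \deg(\rho)$, $k = n/m$, and let $T = (T_0, \ldots, T_{k-1})$ be the partition of $[n]$ into $k$ consecutive blocks of size $m$. Consider $\sigma = \rho \times \rho \times \cdots \times \rho = \prod_{i \in [k]} \rho$. Under the PSH-algebra isomorphism $R(\rho) \cong R(S_k)$ from \S\ref{PSH}, $\sigma$ corresponds to the regular representation of $S_k$, so every $\rho$-primary irreducible occurs as a summand; in particular $\pi \subseteq \sigma$. Since we are working over $\mathbb{C}$ with finite groups, $\sigma$ is semisimple and the inclusion of $G$-modules restricts to an inclusion of $H$-modules, so $\sigma$ inherits $H$-distinguishedness from $\pi$.

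I would then apply Lemma \ref{res-lem} to $\sigma$ with the partition $T$ and each $\pi_i = \rho$. This produces some $A'$ normalizing the torus, subpartitions $T_{a,b} = \{i \in T_a \mid \tau_{A'}(i) \in T_b\}$, and the conclusion that $\boxtimes_a \jr{T_{a,b}}{\rho}$ is $\sigma_{A'}$-distinguished. Because $\rho$ is cuspidal, any proper Jacquet restriction of $\rho$ vanishes; in order for the outer tensor product not to collapse to zero, each $T_a$ must be sent in bulk to a single $T_{\tau(a)}$ by $\tau_{A'}$. This defines an involution $\tau$ on $[k]$, and the conclusion becomes that $\boxtimes_{a \in [k]} \rho$ on the Levi $L(T) = G_m^k$ is distinguished by the fixed subgroup of $\sigma_{A'}|_{L(T)}$.

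Finally I would decompose by $\tau$-orbits: since outer-tensor invariance factors through the orbits, each orbit contributes its own distinguishedness condition. A fixed block $a$ gives $\rho$ on $G_m$ distinguished by an involution, so Theorem \ref{cus-case} yields $\rho \cong \rho^{*}$. A two-element orbit $\{a, \tau(a)\}$ gives $\rho \boxtimes \rho$ on $G_m \times G_m$ distinguished by a twisted-diagonal subgroup $\{(g, \phi(g))\}$, where $\phi$ is the automorphism of $G_m$ induced by $\sigma_{A'}$; unwinding the invariance condition yields $\rho^{*} \cong \rho \circ \phi$. The main obstacle is then to verify that $\phi$ is inner: this should follow from the fact that $\sigma_{A'}$ acts on the Levi as a permutation of indices composed with a diagonal conjugation, both inner operations, so $\rho \circ \phi \cong \rho$ and once again $\rho \cong \rho^{*}$. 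In either case $\rho$ is self-dual, and Corollary \ref{pri-du} concludes.
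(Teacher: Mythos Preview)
Your argument is correct and follows essentially the same route as the paper: reduce via Corollary~\ref{pri-du} to self-duality of $\rho$, pass from $\pi$ to the enveloping product $\rho^{k}$, apply Lemma~\ref{res-lem}, use cuspidality of $\rho$ to force the partition to collapse into singletons and $2$-cycles, and handle each orbit type exactly as you do. The paper's proof is organized identically; your justification that $\phi$ is inner (because the relevant block of $A'$ induces conjugation on $G_m$) is precisely what the paper invokes as ``$\sigma'$ is essentially the restriction of a conjugation.''
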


Because of \ref{pri-du} we only have to prove that $\rho$ is self-dual, and since any primary representation is a sub-representation of $\rho^n$ for some $n$ it's enough to prove:

\begin{theorem}
If $\rho^n$  is $H$ distinguished then $\rho$ is self dual.
\end{theorem}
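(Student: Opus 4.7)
The plan is to apply the restriction lemma (Lemma \ref{res-lem}) to $\pi = \rho^n$, taking the ordered partition $T = \{T_0, \ldots, T_{n-1}\}$ of $[nd]$ (with $d = \deg\rho$) into $n$ consecutive blocks of size $d$, and $\pi_a = \rho$ for each $a$. The lemma produces a torus-normalizing $A'$ together with sub-partitions $\{T_{a,b}\}_{b\in[n]}$ of each $T_a$, defined by $T_{a,b} = \{i \in T_a : f(\tau_{A'}(i)) = b\}$, such that $\boxtimes_a \jr{T_{a,b}}{\rho}$ is $\sigma_{A'}$-distinguished as a representation of the appropriate Levi.

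The crucial input is the cuspidality of $\rho$: its Jacquet restriction to any proper parabolic of $G_d$ vanishes. Hence for $\boxtimes_a \jr{T_{a,b}}{\rho}$ to be nonzero, the sub-partition $\{T_{a,b}\}_b$ of $T_a$ must be trivial for every $a$, so there is a unique $\sigma(a) \in [n]$ with $T_{a,\sigma(a)} = T_a$ and $\tau_{A'}(T_a) = T_{\sigma(a)}$. Since $\tau_{A'}^2 = \mathrm{id}$, the induced map $\sigma : [n] \to [n]$ is an involution, and $\sigma_{A'}$ acts on $\prod_a L(T_a)$ by permuting the factors via $\sigma$ (up to a torus twist).

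I then split on the cycle structure of $\sigma$. If $\sigma$ has a fixed point $a_0$, then $\tau_{A'}$ preserves $T_{a_0}$, so the restriction of $\sigma_{A'}$ to $L(T_{a_0}) \cong G_d$ is an inner involution $\sigma_{A''}$ of $G_d$, and the distinguishing vector restricts to show $\rho$ is $H_{A''}$-distinguished; Theorem \ref{cus-case} applied to the cuspidal $\rho$ then gives $\rho \cong \rho^*$. If instead $\sigma$ has a $2$-cycle $(a,b)$, then writing $A'$ in block form $\bigl(\begin{smallmatrix} 0 & B \\ C & 0 \end{smallmatrix}\bigr)$ on $T_a \oplus T_b$, the relation $A'^2 \in Z(G)$ forces $BC = \lambda I$, so the fixed subgroup of $\sigma_{A'}$ inside $L(T_a) \times L(T_b)$ is the twisted diagonal $\{(X, CXC^{-1}) : X \in G_d\} \cong G_d$. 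A distinguishing vector for $\rho \boxtimes \rho$ under this subgroup produces a nonzero element of $\mathrm{Hom}_{G_d}(\rho \otimes \rho^{\mathrm{Ad}(C)}, \mathbb{C}) \cong \mathrm{Hom}_{G_d}(\rho, \rho^*)$, where the $\mathrm{Ad}(C)$-twist is absorbed because it is inner; Schur's lemma then yields $\rho \cong \rho^*$.

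The main obstacle is making the case split airtight: one has to verify that cuspidality really does upgrade the refinement $\{T_{a,b}\}$ to an honest involution on $n$ symbols, and to handle the degenerate subcase where $\sigma_{A'}$ restricts to the identity of some $L(T_{a_0})$, which forces $\rho$ to be a $G_d$-invariant cuspidal and therefore the trivial character of $F_q^*$ (self-dual, so harmless). Once these bookkeeping points are settled, the proof reduces to a single invocation each of Theorem \ref{cus-case} and Schur's lemma.
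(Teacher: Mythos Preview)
Your proposal is correct and follows essentially the same route as the paper: apply the restriction lemma to $\rho^n$, use cuspidality of $\rho$ to force each $T_{a,b}$ to be empty or all of $T_a$ so that $\tau_{A'}$ induces an involution on the $n$ blocks, and then treat fixed points via Theorem~\ref{cus-case} and $2$-cycles via the twisted-diagonal argument together with Schur's lemma. Your write-up is in fact slightly more careful than the paper's (explicit block form of $A'$, the degenerate case where $\sigma_{A'}$ restricts to the identity on a block), but the underlying strategy is identical.
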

\begin{proof}
We apply \ref{res-lem} on the product $\rho^n$, we get that there is a torus normalising matrix $Q$, it's permutation of the indexes is $\tau$, and we consider the levi that corresponds to the spaces $T_{a,b}=\{i|f(i)=a,f(\tau(b)\}$. The Jacques restriction to it is distinguished with respect to the $Q$-centraliser, however $\rho$ is cuspidal, and all it's non-trivial restrictions are zero, we see that $T_{a,b}$ is either empty or all of $T_{a}$. since $\tau$ is an involution this means that $[m]$ is split to pairs and singletons, where in the singletons $T_{a,a}\neq \phi$ and in the pairs $T_{a,b}\neq \phi \neq T_{b,a}$. so now we have two cases.

Case 1: we have $T_{a,a}\neq \phi$. In this case the $\rho$ siting in $T_{a}$ is distinguished by the restriction of $\sigma^Q$ to $T_{a}$, but since it is cuspidal, we can use \ref{cus-case} and therefore $\rho$ is self dual.

Case 2: We have $T_{a,b}\neq \phi \neq T_{b,a}$, in this case we have $\sigma^Q$ takes $L_{a}$ to $L_{b}$ and vice versa. If we write this map as $\sigma':\Gl{r}{F}{q}\rightarrow \Gl{r}{F}{q}$, then the invariant matrix are exactly $(A,\sigma'(A))$. restricting $\rho \boxtimes \rho$ we get $\rho \otimes \sigma'(\rho)$ where here this is the tensor in $rep(\Gl{n}{F}{q})$. but since $\sigma'$ is essentially the restriction of a conjugation we can see that it is a conjugation map, and conjugation doesn't change representations, so we get that $\rho\otimes\rho$ is $\Gl{n}{F}{q}$ distinguished. Therefore we can write $0\neq (\rho \otimes \rho)^{\Gl{n}{F}{q}}=(\rmf{Hom}(\rho,\rho^*))^{\Gl{n}{F}{q}}$, which means we have a non-zero equivariant map from $\rho$ to $\rho^*$, but since they are irreducible this means they are isomorphic, as wanted.
\end{proof}
\label{PrRep}
\subsection{End of Proof}
\label{EoP}

Now we are only left with proving:
\begin{theorem}
If $\pi$ is a product of $\rho_{i}$-primary representation $\pi_{\rho_{i}}$, and $\pi$ is $\rmf{H}$ distinguished, then $\pi$ is self dual.
\end{theorem}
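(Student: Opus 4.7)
The plan is to apply the restriction lemma \ref{res-lem} to the primary factorization $\pi = \prod_i \pi_{\rho_i}$, and to exploit distinctness of the $\rho_i$'s to force a rigid pairing of the primary factors.

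First I apply \ref{res-lem} to the partition $\{T_i\}$ of $[n]$ coming from the primary factorization; this yields a matrix $A'$ normalizing the standard torus with index-permutation $\tau$, together with a refinement $\{T_{a,b}\}$ of each $T_a$, such that $\boxtimes_{a}\, \jr{\{T_{a,b}\}_b}{\pi_{\rho_a}}$ is $\sigma_{A'}$-distinguished on the corresponding Levi $L'$. Since everything is semisimple over $\mathbb{F}_q$, some irreducible summand $\boxtimes_{a,b} \pi_{a,b}$ of this tensor is already distinguished; and since Jacquet restriction carries $R(\rho_a)$ into a tensor power of itself, each $\pi_{a,b}$ is automatically a $\rho_a$-primary irreducible.

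Next I analyze the involution $\sigma_{A'}$ on $L'$. Because $\tau$ is an involution on $[n]$, it sends $T_{a,b}$ bijectively to $T_{b,a}$, so $\sigma_{A'}$ swaps the factors $G_{|T_{a,b}|}$ and $G_{|T_{b,a}|}$; consequently $H_{A'}\cap L'$ factors as a product of (i) involution-invariants on each fixed cell $G_{|T_{a,a}|}$ and (ii) graphs of inner isomorphisms on each off-diagonal pair $G_{|T_{a,b}|}\times G_{|T_{b,a}|}$. The cell-wise distinguishedness of the chosen summand then reads: each $\pi_{a,a}$ with $T_{a,a}\neq \emptyset$ is distinguished by an involution on $G_{|T_{a,a}|}$, so by the primary case theorem $\rho_a$ is self-dual; and for each $a\neq b$ with $T_{a,b}\neq \emptyset$, the graph-distinguishedness of $\pi_{a,b}\otimes \pi_{b,a}$ yields $\pi_{a,b}\cong \pi_{b,a}^{*}$ (twisting by an inner automorphism preserves isomorphism class), whence $\rho_a = \rho_b^{*}$ by matching cuspidal supports.

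The key bookkeeping is that distinctness of the $\rho_i$'s forces $\tau$ to descend to an involution $\bar\tau\colon [m]\to [m]$. Indeed, if $T_{a,b}$ and $T_{a,c}$ were both non-empty with $b\neq c$, the previous step would yield $\rho_b = \rho_c$, contradicting distinctness; so for each $a$ there is a unique $b = \bar\tau(a)$ with $T_{a,b}\neq \emptyset$, and then automatically $T_{a,\bar\tau(a)} = T_a$. At a fixed point $\bar\tau(a)=a$ the refinement of $T_a$ is trivial, so the Jacquet factor is $\pi_{\rho_a}$ itself, distinguished by the involution $\sigma_{A'}|_{G_{n_a}}$, and the primary case theorem gives $\pi_{\rho_a}\cong \pi_{\rho_a}^{*}$. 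At a non-trivial pair $\bar\tau(a)=b\neq a$ the Jacquet factors on both sides are again trivial, so $\pi_{\rho_a}\otimes \pi_{\rho_b}$ is graph-distinguished, giving $\pi_{\rho_b}\cong \pi_{\rho_a}^{*}$. In either case the multiset of primary factors is stable under $^{*}$, and since the parabolic-induction product is commutative in the Zelevinsky PSH-algebra and commutes with contragredient, $\pi^{*} \cong \prod_a \pi_{\rho_a}^{*} \cong \prod_a \pi_{\rho_a} \cong \pi$.

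The main obstacle I expect is verifying the clean cell-wise decomposition of the distinguishedness of $\boxtimes_{a,b}\pi_{a,b}$: checking that $H_{A'}\cap L'$ genuinely factors over the $\tau$-orbits of cells, that the off-diagonal fixed subgroup is a graph of an inner automorphism (so the twist drops out of the isomorphism class), and that each $\pi_{a,b}$ inherits $\rho_a$-primarity via the PSH co-multiplication. Once these are in place, the rigidity forced by distinctness of the $\rho_i$'s makes the rest essentially automatic.
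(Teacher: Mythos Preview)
Your proposal is correct and follows essentially the same route as the paper: apply Lemma~\ref{res-lem} to the primary factorization, observe that $\tau$ pairs the cells $T_{a,b}$ with $T_{b,a}$, read off $\rho_a\cong\rho_b^{*}$ from each nonempty cell-pair (and $\rho_a\cong\rho_a^{*}$ from each diagonal cell) via the primary case, and then use distinctness of the $\rho_i$ to force the index involution $\bar\tau$ on $[m]$; once the refinements are seen to be trivial, the pair/singleton dichotomy gives $\pi_{\rho_a}^{*}\cong\pi_{\rho_{\bar\tau(a)}}$ and commutativity of the product finishes. The only cosmetic difference is that at fixed points you invoke the primary-case theorem directly on $\pi_{\rho_a}$, whereas the paper first extracts $\rho_a\cong\rho_a^{*}$ and then applies Corollary~\ref{pri-du}; and you make explicit the passage to an irreducible summand and the cell-wise factorization of $H_{A'}\cap L'$, which the paper leaves implicit.
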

The proof is done in a very similar way to the previous case.
\begin{proof}
Again we apply \ref{res-lem} on the product, so we have a matrix $Q$, a permutation $\tau$ and the partition $T_{a,b}$ as before.

For each $a$, if $T_{a,a}$ is non-empty, then we have some Jacques restriction of $\pi_{\rho_{a}}$ which is distinguished, since a restriction of a $\rho_{a}$ primary representation is still primary, this means that $\rho_{a}$ is self dual.
On the other hand if $T_{a,b}$ is not empty (and in particular $T_{b,a}$ is also non-empty) then we get that the product of a Jacques restriction of $\pi_{\rho_{a}}$ and of $\pi_{\rho_{b}}$ is distinguished. just like in the previous case this means that these restrictions are dual to each other, and therefore $\rho_{a}$ is dual to $\rho_{b}$.

Combining these two results we see that we cannot have $T_{a,b},T_{a,c}$ both non-empty for $b\neq c$, as then $\rho_{b}=\rho_{a}^{*}=\rho_{c}$, So we see that again the indexes are divided to singletons and pairs.

The $\rho_{a}$ such that $T_{a,a}$ is non empty are self dual and therefore also $\pi_{\rho_{a}}$ is self dual.
If $T_{a,b}$ is non empty, then since the restrictions don't do anything, we get $\pi_{\rho_{a}}^{*} = \pi_{\rho_{b}}$.

And since duality commutes with the product, we get that $\pi$ is self-dual.
\end{proof}
\medskip
\printbibliography


\end{document}